\documentclass{amsart}

\usepackage{amsmath,amssymb,amsthm,mathtools, color, amscd}
\usepackage{hyperref}

\usepackage{array, booktabs}

\newcommand{\cJ}{\mathcal{J}}
\newcommand{\cF}{\mathcal{F}}

\newcommand{\cZ}{\mathcal{Z}}

\newcommand{\RP}{\mathbb{R}P}

\newcommand{\Q}{\mathbb{Q}}

\newcommand{\R}{\mathbb{R}}
\newcommand{\Z}{\mathbb{Z}}

\newcommand{\Tor}{\operatorname{Tor}}
\newcommand{\Hom}{\operatorname{Hom}}
\newcommand{\supp}{\operatorname{supp}}
\DeclareMathOperator{\crit}{crit}
\DeclareMathOperator{\odd}{odd}
\DeclareMathOperator{\row}{row}
\DeclareMathOperator{\im}{im}
\DeclareMathOperator{\Sq}{Sq}
\DeclareMathOperator{\Bier}{Bier}

\theoremstyle{plain}
\newtheorem{theorem}{Theorem}[section]
\newtheorem{proposition}[theorem]{Proposition}
\newtheorem{corollary}[theorem]{Corollary}
\newtheorem{lemma}[theorem]{Lemma}
\numberwithin{equation}{section}

\theoremstyle{definition}

\newtheorem{example}[theorem]{Example}

\theoremstyle{remark}
\newtheorem{remark}[theorem]{Remark}

\begin{document}
\title{Small covers as pullbacks from the simplex}

\author[S. Choi]{Suyoung Choi}
\address{Department of mathematics, Ajou University, 206, World cup-ro, Yeongtong-gu, Suwon 16499, Republic of Korea}
\email{schoi@ajou.ac.kr}

\author[H. Jang]{Hyeontae Jang}
\address{School of Mathematics, Korea Institute for Advanced Study, Seoul, South Korea}
\email{hjang1112@kias.re.kr}

\author[Y. Yoon]{Younghan Yoon}
\address{Research Institute of Basic Sciences, Ajou University, 206, World cup-ro, Yeongtong-gu, Suwon 16499, Republic of Korea}
\email{younghan300@ajou.ac.kr}

\date{\today}
\thanks{This work was supported by the National Research Foundation of Korea Grant funded by
the Korean Government (RS-2025-00521982), by a KIAS Individual Grant (MG105401) at Korea Institute for Advanced Study, and by Basic Science Research Program through the  National Research Foundation of Korea(NRF) funded by the Ministry of Education(RS-2021-NR060141)}

\begin{abstract}
We introduce and study small covers that are pullbacks from the simplex, extending pullbacks from the linear model.
Our main result gives several equivalent characterizations of this class, including torsion-freeness of odd-degree integral cohomology, vanishing of the first Steenrod square on even-degree mod~$2$ cohomology, and relations among integral and mod~$2$ Betti numbers.
\end{abstract}

\maketitle

\section{Introduction}

\emph{Small covers}, introduced in~\cite{Davis-Januszkiewicz1991}, form a class of smooth manifolds equipped with a locally standard $\Z_2^n$-action whose orbit spaces are simple convex polytopes.
They can be viewed as topological generalizations of the real loci of smooth projective toric varieties.

Let $P$ be an $n$-dimensional simple convex polytope with facets $\cF(P)=\{F_1,\dots,F_m\}$.
A (mod $2$) \emph{characteristic map} on $P$ is a map
$$
    \lambda \colon \cF(P)\to \Z_2^n
$$
such that for any vertex $v=F_{i_1}\cap \cdots \cap F_{i_n}$, the vectors $\lambda(F_{i_1}),\dots,\lambda(F_{i_n})$ form a basis of $\Z_2^n$.
For a face $F=F_{i_1}\cap \cdots \cap F_{i_k}$, let $G_F\subset \Z_2^n$ be the subgroup generated by $\lambda(F_{i_1}),\dots,\lambda(F_{i_k})$.
The small cover $M$ over $P$ determined by $\lambda$ is defined by
$$
    M:= (\Z_2^n\times P)/\sim,
$$
where $(g,p)\sim (h,q)$ if $p=q$ and $g^{-1}h\in G_{F(p)}$, and $F(p)$ denotes the unique face whose relative interior contains $p$.
See~\cite{Davis-Januszkiewicz1991,Buchstaber-Panov2015book} for more details.

It is known~\cite{Davis-Januszkiewicz1991} that the mod $2$ cohomology ring of a small cover is completely determined by the underlying polytope and the characteristic map.
The rational cohomology groups were computed in~\cite{Suciu-Trevisan2012,Choi-Park2017Cohomology}, and the integral cohomology groups were computed by~\cite{Cai-Choi2021}.
Furthermore, the ring structure of the cohomology with coefficients in a ring in which $2$ is invertible was described in~\cite{Choi-Park2020}.

A small cover as a \emph{pullback from the linear model}, introduced in~\cite{Davis-Januszkiewicz1991}, has been further studied in several later works in toric topology and related areas~\cite{Izmestiev2001,Joswig2002,Nakayama-Nishimura2005, Cai-Choi-Park2020}.
It was shown in~\cite{Davis-Januszkiewicz1991} that this condition is sufficient for torsion-freeness of integral cohomology, and was later shown in~\cite{Cai-Choi-Park2020} to be also necessary. 
In fact, it is equivalent to the vanishing of the first Steenrod square on the first mod~$2$ cohomology~\cite{Cai-Choi-Park2020}.
Earlier work~\cite{Bahri-Bendersky2000} also considered Steenrod square operations in the mod~$2$ cohomology of small covers.

In this paper, we study an analogous necessary and sufficient condition for a small cover to be a \emph{pullback from the simplex}, defined as follows.
Let $\Delta^n$ be the $n$-simplex with facet set $\cF(\Delta^n)=\{F_1,\dots,F_{n+1}\}$.
We denote by
\[
\lambda_{\Delta}\colon \cF(\Delta^n)\to \Z_2^n
\]
the characteristic map given by
$$
    \lambda_{\Delta}(F_i)=e_i \quad (1\le i\le n), \quad \text{and} \quad \lambda_{\Delta}(F_{n+1})=e_1+\cdots+e_n,
$$
up to basis change in $\Z_2^n$.
The small cover over $\Delta^n$ determined by $\lambda_{\Delta}$ is the real projective space $\RP^n$.

Let $P$ be an $n$-dimensional simple convex polytope with facet set $\cF(P)$, and let $M$ be the small cover over $P$ determined by $\lambda$.
We say that $M$ is a pullback from the simplex if there exists a map
\[
    c\colon \cF(P)\longrightarrow \cF(\Delta^n)
\]
that induces a nondegenerate simplicial map between the associated simplicial complexes, and satisfies $\lambda=\lambda_{\Delta}\circ c$.
Equivalently, $M$ is obtained from the canonical small cover $\RP^n\to \Delta^n$ by pullback along the corresponding map to the simplex.
More precisely, if $\pi\colon M\to P$ and $\pi_{\Delta}\colon \RP^n\to \Delta^n$ are the orbit maps, then the above condition means that $M$ fits into the pullback diagram
\[
\begin{CD}
M @>>> \RP^n \\
@V{\pi}VV @VV{\pi_{\Delta}}V \\
P @>{\varphi}>> \Delta^n,
\end{CD}
\]
where $\varphi\colon P\to \Delta^n$ denotes the map arising from the corresponding nondegenerate simplicial map.

\begin{theorem}\label{thm:main}
    Let~$M$ be a small cover.
    Then the following are equivalent.
    \begin{enumerate}
        \item\label{main1} $M$ is a pullback from the simplex;
        \item\label{main4} $H^{\odd}(M;\Z)$ is torsion-free;
        \item\label{main5} $H^3(M;\Z)$ is torsion-free;
        \item\label{main2} $\Sq^1$ vanishes on $H^{2k}(M;\Z_2)$ for all $k\geq 0$;
        \item\label{main3} $\Sq^1$ vanishes on $H^2(M;\Z_2)$;
        \item\label{main6} $b^{2k}(M)-b^{2k-1}(M) = b^{2k}_{\Z_2}(M)-b^{2k-1}_{\Z_2}(M)$ for every $k\ge 1$; and
        \item\label{main7} $b^2(M)-b^1(M) =b^2_{\Z_2}(M)-b^1_{\Z_2}(M)$.
    \end{enumerate}
\end{theorem}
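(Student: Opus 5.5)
The plan is to split the seven conditions into an elementary block, which needs only universal coefficients and Bocksteins, and a geometric block, which needs the Davis--Januszkiewicz presentation
\[
H^*(M;\Z_2)=\Z_2[v_1,\dots,v_m]/(I_P+J),
\]
where $I_P$ is the Stanley--Reisner ideal and $J$ is generated by the linear forms $\theta_j=\sum_i \langle\lambda(F_i),e_j^*\rangle v_i$. Throughout, I will use two facts: all torsion in $H^*(M;\Z)$ has order $2$, which I take from the integral computation in \cite{Cai-Choi2021} (or via a transfer argument), and $H^1(M;\Z)$ is torsion-free.

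\emph{Elementary block.} Let $t^i$ be the number of $\Z_2$ summands of $H^i(M;\Z)$. Universal coefficients give
\[
b^i_{\Z_2}=b^i+t^i+t^{i+1},
\]
hence
\[
\bigl(b^{2k}_{\Z_2}-b^{2k-1}_{\Z_2}\bigr)-\bigl(b^{2k}-b^{2k-1}\bigr)=t^{2k+1}-t^{2k-1}.
\]
Since $t^1=0$, condition (6) says $t^{2k+1}=0$ for all $k$, which is (2). Likewise (7) is equivalent to $t^3=0$, which is (3).

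Next, $\Sq^1=\rho\circ\beta$, where $\beta\colon H^{2k}(M;\Z_2)\to H^{2k+1}(M;\Z)$ surjects onto the $2$-torsion. Because the torsion has exponent $2$, no torsion element is divisible by $2$, so $\rho$ is injective on the torsion subgroup. Therefore $\Sq^1$ vanishes on $H^{2k}(M;\Z_2)$ iff $H^{2k+1}(M;\Z)$ is torsion-free. This gives $(2)\Leftrightarrow(4)$ and $(3)\Leftrightarrow(5)$. Trivially $(4)\Rightarrow(5)$, so the whole theorem reduces to proving $(1)\Rightarrow(4)$ and $(5)\Rightarrow(1)$.

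\emph{$(1)\Rightarrow(4)$.} Let $C_1,\dots,C_{n+1}$ be the colour classes $c^{-1}(F_j)$, and set $w_j=\sum_{i\in C_j}v_i$. The relations $J$ say exactly that $w_1=\dots=w_n=w_{n+1}=:w$. Modulo $I_P$, $H^{\mathrm{even}}$ is spanned by the classes $v_\sigma$ attached to faces $\sigma$ of even size, since squares $v_i^2$ can be eliminated using $J$. By the Cartan formula,
\[
\Sq^1 v_\sigma=\sum_{i\in\sigma}v_i\,v_\sigma .
\]
For $i\in\sigma$, every other $k\in C_{c(i)}$ satisfies $v_kv_\sigma=0$: otherwise $\sigma\cup\{k\}$ would be a face containing two facets of the same colour, contradicting nondegeneracy. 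Hence $v_iv_\sigma=w\,v_\sigma$, so $\Sq^1 v_\sigma=|\sigma|\,w\,v_\sigma$, which vanishes when $|\sigma|$ is even.

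\emph{$(5)\Rightarrow(1)$: the main obstacle.} First observe that (1) is equivalent to a linear-algebra condition. After a basis change, $\lambda$ must take values in $\{e_1,\dots,e_n,e_1+\dots+e_n\}$. Nondegeneracy of $c$ is then automatic, since the vectors at a vertex form a basis.

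The argument proceeds as follows.
\begin{enumerate}
\item Fix a vertex $p=F_{1}\cap\dots\cap F_{n}$ and normalize $\lambda(F_i)=e_i$ for $i\le n$.
\item For two facets $F_i,F_j$ through $p$, expand $0=\Sq^1(v_iv_j)=v_i^2v_j+v_iv_j^2$. Use $\theta_i$ to rewrite $v_i^2v_j=\sum_k\lambda_{k,i}v_kv_iv_j$, summing over $k\neq i$ with $\{i,j,k\}$ a face.
\item Read off coefficients. I would use a shelling starting at $p$, or restrict to the link of the edge $F_i\cap F_j$, to argue that the relevant face monomials $v_iv_jv_k$ are linearly independent in $H^3$. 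This should yield, for every facet $F_k$ adjacent to $F_i\cap F_j$, the condition $\lambda_{k,i}=\lambda_{k,j}$.
\item Conclude locally that each neighbouring $\lambda(F_k)$ has all its coordinates on $\{i,j\}$ equal. Together with unimodularity at the adjacent vertex, this forces $\lambda(F_k)$ to be a standard vector $e_l$ or the all-ones vector.
\item Propagate this along the connected edge graph of $P$, checking that the set $\{e_1,\dots,e_n,\sum e_l\}$ is preserved when passing to a neighbouring vertex.
\end{enumerate}

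The delicate points are the independence claim in step 3 and the fact that condition (5) controls only adjacent pairs. If the independence claim fails globally, I would instead restrict to the small cover over a $3$-face (or a $2$-face) containing $F_i\cap F_j$, which is itself a pullback situation, and work with its cohomology.
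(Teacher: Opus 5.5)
Your proof of (1)$\Rightarrow$(4) is the paper's argument ($v_j^2=\tau v_j$ via nondegeneracy of $c$). The gap is in the ``elementary block'', which rests on a premise that is false in general: it is not true that all torsion in $H^*(M;\Z)$ has order $2$. The Cai--Choi theorem you cite says the opposite. The $2^{k+1}$-torsion of $H^i(M;\Z)$ matches the $2^k$-torsion of $\bigoplus_{\omega\in\row\Lambda}\widetilde H^{i-1}(K_\omega;\Z)$, and odd torsion is inherited from the $K_\omega$. So $\Z_4$ or odd-order summands appear as soon as some $K_\omega$ has torsion. A transfer along $\R\cZ_K\to M$ only shows that the kernel of the pullback is annihilated by $2^{m-n}$.

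Without this premise, the vanishing of $\Sq^1=\rho\circ\beta$ on $H^{2k}(M;\Z_2)$ only says that every element of order $2$ in $H^{2k+1}(M;\Z)$ is divisible by $2$, which allows $\Z_4$ and odd torsion. Likewise, (6) and (7) only say $\mu^{2k+1}(M)=0$, resp.\ $\mu^3(M)=0$, which allows odd torsion. Hence (4)$\Rightarrow$(2), (5)$\Rightarrow$(3), (6)$\Rightarrow$(2) and (7)$\Rightarrow$(3) are unproved. Nothing in your proposal shows that a pullback from the simplex satisfies (2), (3), (6) or (7), so your reduction to ``(1)$\Rightarrow$(4) and (5)$\Rightarrow$(1)'' does not go through.

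The missing idea is the paper's Lemma~\ref{lemma:odd-cohomology-torsion-free-simplex-pullback}, i.e.\ (1)$\Rightarrow$(2), which genuinely needs the integral structure theorem and a shelling. For a pullback from the simplex, each $K_\omega$ is the full subcomplex on $c^{-1}(\chi)$ for an even subset $\chi\subset[n+1]$. Every facet meets $c^{-1}(\chi)$ in $|\chi|$ or $|\chi|-1$ vertices. So the Cai--Choi critical cochain complex of a shelling is a two-term complex of free groups in degrees $|\chi|-2$ and $|\chi|-1$, and $\widetilde H^*(K_\chi;\Z)$ has torsion only in the odd degree $|\chi|-1$. Consequently, odd-degree torsion of $M$ consists only of $\Z_2$ summands, and these are killed by $\Sq^1=0$ on even degrees.

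With this lemma, your valid one-way implications close the cycle: (1)$\Rightarrow$(2)$\Rightarrow$(3)$\Rightarrow$(7)$\Rightarrow$(5)$\Rightarrow$(1), together with (2)$\Rightarrow$(6)$\Rightarrow$(7) and (1)$\Rightarrow$(4)$\Rightarrow$(5). Here (7)$\Rightarrow$(5) uses only that $\mu^3=0$ forces $\im\beta=\ker(\times 2)=0$ in $H^3(M;\Z)$.

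For (5)$\Rightarrow$(1), your route is workable, but step~3 is left unproved. The independence you need can only hold for the monomials $v_iv_jv_k$ with $k\notin p$. Those with $k\in p$ satisfy $v_iv_jv_l=\sum_{k\notin p}\lambda_{l,k}v_iv_jv_k$ for $l\neq i,j$, where $\lambda_{l,k}$ is the $l$th coordinate of $\lambda(F_k)$. For $k\notin p$ the claim is true, e.g.\ via injectivity of the Gysin map from the characteristic submanifold over $F_i\cap F_j$. The paper sidesteps independence entirely by multiplying $\Sq^1(v_{u_s}v_{u_t})$ by $R'=\prod_{r\neq i,s,t}v_{u_r}$. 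In top degree, $Rv_q\neq 0$ if and only if $(\sigma\setminus\{u_i\})\cup\{q\}$ is a facet. This isolates the single neighbour $p_i(\sigma)$ and yields directly the local criterion $S_i^\sigma\in\{\{i\},[n]\}$ that your steps~4--5 arrive at.
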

Although we formulate Theorem~\ref{thm:main} only for small covers over simple polytopes, the same statement extends to the more general setting of shellable simplicial spheres, as will be explained in Section~\ref{sec:pre}.

In Section~\ref{sec:Steenrod}, we prove Theorem~\ref{thm:Sq1_H2_characterizes_simplex_pullback}, which establishes the equivalence of \eqref{main1}, \eqref{main2}, and \eqref{main3} of Theorem~\ref{thm:main}.
As a consequence, we obtain Corollary~\ref{cor:main}, which recovers the result in~\cite{Cai-Choi-Park2020} on pullbacks from the linear model.
\begin{corollary}\label{cor:main}
    Let~$M$ be a small cover.
    Then the following are equivalent.
    \begin{enumerate}
        \item $M$ is a pullback from the linear model;
        \item $H^\ast(M;\Z)$ is torsion-free; and
        \item $\Sq^1$ vanishes on $H^1(M;\Z_2)$.
    \end{enumerate}
\end{corollary}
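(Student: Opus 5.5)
The plan is to derive Corollary~\ref{cor:main} from Theorem~\ref{thm:main}, or more precisely from the equivalence of \eqref{main1}, \eqref{main2}, \eqref{main3}. We use the structure of $H^\ast(M;\Z_2)$ and the classical facts of~\cite{Davis-Januszkiewicz1991}. Recall that $H^\ast(M;\Z_2)=\Z_2[v_1,\dots,v_m]/(I_P+J_\lambda)$ with $\deg v_i=1$. For the dual basis coordinates $t_1,\dots,t_n$ of $H^1(M;\Z_2)\cong \Hom(\Z_2^n,\Z_2)$ one has $t_j=\sum_i \lambda_j(F_i)v_i$. Moreover, $\Sq^1 x=x^2$ for $x\in H^1$.

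First, \emph{(1)$\Rightarrow$(2)}. A pullback from the linear model has torsion-free integral cohomology; this is the result of~\cite{Davis-Januszkiewicz1991} via the perfect Morse function/cell structure with cells in even-dimensional pattern after the integral lift. Alternatively, we argue as follows. Being a pullback from the linear model means $\lambda$ factors through the cube $I^n$, whose small cover is $(\RP^1)^n$. Then $\lambda$ factors through the simplex by composing with a suitable facet map. So (1) of Theorem~\ref{thm:main} holds, giving torsion-freeness in odd degrees. For even degrees we still need an argument; the cleanest is to quote~\cite{Davis-Januszkiewicz1991} directly.

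Second, \emph{(2)$\Rightarrow$(3)}. This is standard. If $H^\ast(M;\Z)$ is torsion-free, the Bockstein $\beta\colon H^1(M;\Z_2)\to H^2(M;\Z)$ has image in the torsion, hence vanishes. Since $\Sq^1=\rho\circ\beta$, $\Sq^1$ vanishes on $H^1(M;\Z_2)$.

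Third, \emph{(3)$\Rightarrow$(1)}, which is the main step. The hypothesis says $x^2=0$ for every $x\in H^1(M;\Z_2)$. Since $H^\ast(M;\Z_2)$ is generated by degree one, $\Sq^1$ on $H^2$ is determined by the Cartan formula: $\Sq^1(xy)=x^2y+xy^2=0$. Hence (5) of Theorem~\ref{thm:main} holds, and $M$ is a pullback from the simplex via some $c\colon\cF(P)\to\cF(\Delta^n)$. It remains to upgrade to the linear model. Write $\lambda(F_i)=e_{c(i)}$ with $e_{n+1}=e_1+\dots+e_n$, and let $v_{(k)}=\sum_{c(i)=k}v_i$. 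Then the linear relations read $t_j=v_{(j)}+v_{(n+1)}$. The squares satisfy $v_i^2=v_i\sum_{F_l\cap F_i\ne\emptyset}(\ldots)$. The condition $t_j^2=0$ for all $j$ should force the color class $c^{-1}(n+1)$ to behave like a second copy of each color: $v_{(n+1)}$ splits as $\sum_j w_j$ with each $w_j$ pairing with $v_{(j)}$ as opposite facets. In terms of the pullback, vanishing of squares means the composite $P\to\Delta^n$ can be re-factored through the cube, i.e.\ recoloring each facet in $c^{-1}(n+1)$ by a coordinate $j$ yields a characteristic map $\cF(P)\to\{e_1,\dots,e_n\}$ with each color occurring on both sides. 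This would be achieved via $\lambda'$ with $\lambda'(F)$ chosen among the $e_j$, using that $t_j^2=0$ means the monomials $v_av_b$ with $c(a)=j$, $c(b)=n+1$ adjacent must be cancelled.

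The expected obstacle is this last recoloring, namely showing that $\Sq^1|_{H^1}=0$ yields an honest linear-model coloring rather than merely the simplex one. If the direct argument fails, I would instead prove (3)$\Rightarrow$(2) by the Bockstein spectral sequence, using the simplex case for odd degrees and the vanishing of the whole $\Sq^1$ (by Cartan, since $\Sq^1$ is a derivation killing generators). Then I would quote~\cite{Cai-Choi-Park2020} for (2)$\Rightarrow$(1).
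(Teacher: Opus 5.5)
Your steps (1)$\Rightarrow$(2) (quoting Davis--Januszkiewicz) and (2)$\Rightarrow$(3) match the paper. So does the first half of (3)$\Rightarrow$(1): Cartan gives $\Sq^1|_{H^2}=0$, so Theorem~\ref{thm:Sq1_H2_characterizes_simplex_pullback} makes $M$ a pullback from the simplex. The gap is the upgrade from the simplex to the linear model, and your sketch of it rests on two errors.

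First, the classes $t_j=\sum_i\lambda_j(F_i)v_i$ are precisely the generators of $J_\lambda$, so they are \emph{zero} in $H^\ast(M;\Z_2)$. They are not coordinates on $H^1(M;\Z_2)$, which has dimension $m-n$. Hence ``$t_j^2=0$'' is vacuous and uses none of hypothesis (3).

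Second, ``recolouring'' the facets in $c^{-1}(n+1)$ changes $\lambda$, and hence changes $M$. Since any $n$ of the vectors $e_1,\dots,e_n,e_1+\dots+e_n$ form a basis, what you must show is that some colour class $c^{-1}(i)$ is \emph{empty}. It is not that colour $n+1$ splits into copies of the others.

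The missing idea is the identity $x^2=\tau x$ for all $x\in H^1(M;\Z_2)$.
\begin{itemize}
\item Nondegeneracy of $c$ gives $v_jv_k=0$ for distinct $j,k$ of the same colour. So $v_j^2=\tau_{c(j)}v_j$, where $\tau_i=\sum_{c(k)=i}v_k$.
\item The linear relations $\tau_\ell+\tau_{n+1}=0$ make all $\tau_i$ equal to a single class $\tau$.
\item If every colour occurs, then $\tau\neq 0$. Its support $c^{-1}(n+1)$ is not of the form $c^{-1}(\chi)$ with $\chi$ even, so it does not lie in $\row\Lambda$.
\item Hypothesis (3) then gives $\tau\cdot H^1=0$. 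Because $H^\ast(M;\Z_2)$ is generated in degree one, this gives $\tau\cdot H^{>0}=0$.
\item This contradicts mod~$2$ Poincar\'e duality, which produces $y\in H^{n-1}(M;\Z_2)$ with $\tau y\neq 0$.
\end{itemize}

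Your Bockstein fallback does not substitute for this argument. Vanishing of $\Sq^1$ on all of $H^\ast(M;\Z_2)$ only kills the first Bockstein differential, i.e.\ it rules out $\Z_2$ summands in $H^\ast(M;\Z)$. Nothing you have established controls $\Z_{2^r}$ summands with $r\ge 2$, or odd torsion, in even degrees. Theorem~\ref{thm:main} only gives torsion-freeness in odd degrees. By Theorem~\ref{CaiChoi}, such even-degree torsion would come from the cokernel $\widetilde H^{|\chi|-1}(K_\chi;\Z)$ of the two-term critical complex, which the simplex structure alone does not force to be free. So collapse of the spectral sequence at $E_2$ is exactly the point in question. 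Finally, citing \cite{Cai-Choi-Park2020} for (2)$\Rightarrow$(1) amounts to citing the corollary itself.
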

In Section~\ref{sec:torsionfree}, we prove Theorem~\ref{thm:odd-torsionfree-betti-characterization}, showing that \eqref{main1}, \eqref{main4}, \eqref{main5}, \eqref{main6}, and \eqref{main7} of Theorem~\ref{thm:main} are equivalent.
The first five conditions in Theorem~\ref{thm:main} may be viewed as a natural extension of the results in~\cite{Cai-Choi-Park2020} for pullbacks from the linear model.
Conditions~\eqref{main6} and \eqref{main7} are new, although related phenomena for special cases of pullbacks from the simplex were already established in~\cite[Corollary~7.3]{Choi-Yoon-Yu2026}.

\section{Preliminaries}\label{sec:pre}

\subsection{Real toric spaces}
Let $K$ be a simplicial complex on the vertex set $[m] = \{1,\ldots,m\}$.
The \emph{real moment-angle complex} $\R\cZ_{K}$ associated with $K$ is defined by
\[
\R\cZ_{K}
= \bigcup_{\sigma \in K}
\Bigl\{ (x_1,\ldots,x_m) \in (D^1)^m \;\Big|\; x_i \in S^0 \text{ for } i \notin \sigma \Bigr\},
\]
where $D^1 = [-1,1]$ is the $1$-dimensional disk and $S^0 = \{-1,1\}$ is its boundary.

Let $\lambda \colon [m] \to \Z_2^n$ be a map with $n \le m$. 
We associate to $\lambda$ the $n \times m$ matrix over $\Z_2$
\[
\Lambda = \big[\lambda(1)\ \cdots\ \lambda(m)\big],
\]
whose $i$th column is $\lambda(i)$. 
This matrix determines a linear map $\Z_2^m \to \Z_2^n$, which we also denote by $\lambda$.
Throughout the paper, we will use $\lambda$ interchangeably to denote the original map and the induced linear map, depending on the context.

The standard sign action of $\Z_2^m$ on $(D^1)^m$ restricts to an action of $\ker \lambda$ on $\R\cZ_{K}$.
The \emph{real toric space} $M(K,\lambda)$ corresponding to the pair $(K,\lambda)$ is defined as
$$
    M(K,\lambda) = \R\cZ_{K} / \ker \lambda.
$$

It is known~\cite{Choi-Kaji-Theriault2017} that the action of $\ker \lambda$ on $\R\cZ_{K}$ is free if and only if, for every simplex $\{i_1,\ldots,i_r\} \in K$, the vectors
\[
\lambda(i_1), \ldots, \lambda(i_r)
\]
are linearly independent in $\Z_2^n$.
When this condition holds, the map $\lambda$ is called a (mod $2$) \emph{characteristic map} over $K$, and the associated matrix~$\Lambda$ is called a (mod $2$) \emph{characteristic matrix}.
In particular, this implies that $\dim K \le n-1$, and throughout we assume $\dim K = n-1$.

Let $M$ be a real toric space determined by a simplicial complex $K$ on~$[m]$ together with a (mod $2$) characteristic map $\lambda \colon [m] \to \Z_2^n$.
If $K$ is polytopal, then~$M$ is called a \emph{small cover}~\cite{Davis-Januszkiewicz1991}, and if $K$ is star-shaped, then~$M$ is called a \emph{real topological toric manifold}~\cite{Ishida-Fukukawa-Masuda2013}.
When $K$ is polytopal, this definition is equivalent to the original one given in Introduction for small covers over the corresponding simple polytope~\cite{Davis-Januszkiewicz1991,Buchstaber-Panov2015book}.

\begin{theorem}\cite[Theorem~5.12]{Davis-Januszkiewicz1991}
Let $K$ be a simplicial sphere, and let $M = M(K,\lambda)$ be a real toric space.
The $\Z_2$-cohomology ring of $M$ is given by
$$
    H^\ast(M;\Z_2)\cong \Z_2[v_1,\dots,v_m]/(I_K+J_\lambda),
$$
where each $v_j$ has degree $1$, $I_K$ is the Stanley--Reisner ideal of $K$, and
$$
    \cJ_\lambda = 
    \left\langle
    \lambda_{11}v_1 + \cdots + \lambda_{1m}v_m,\,
    \ldots,\,
    \lambda_{n1}v_1 + \cdots + \lambda_{nm}v_m
    \right\rangle,
$$
where~$\lambda(i) = (\lambda_{1i},\ldots,\lambda_{ni})$ for all~$1 \leq i \leq m$.
\end{theorem}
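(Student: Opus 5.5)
The plan is to follow the Borel-construction argument of Davis and Januszkiewicz: realize $M$ as the fibre of a fibration over $B\Z_2^n$ whose total space has mod $2$ cohomology equal to the face ring of $K$, and then use the Cohen--Macaulay property of the sphere $K$ to identify $H^\ast(M;\Z_2)$ with the quotient by the linear forms coming from $\lambda$.

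\emph{Step 1: the Borel construction and its cohomology.} Set $G=\Z_2^m$ and $H=\ker\lambda$. Since $\lambda$ is a characteristic map, $H$ acts freely on $\R\cZ_K$. Hence
\[
M=\R\cZ_K/H\simeq \R\cZ_K\times_H EG .
\]
This gives a fibration
\[
M\to DJ_\R(K):=\R\cZ_K\times_G EG\to B(G/H)\cong B\Z_2^n .
\]
I would first show that $H^\ast(DJ_\R(K);\Z_2)\cong\Z_2[v_1,\dots,v_m]/I_K=\Z_2[K]$ with $\deg v_i=1$. For this, write $DJ_\R(K)$ as the union (homotopy colimit) of the spaces $B\Z_2^\sigma$ over $\sigma\in K$, equivalently as the real polyhedral product $(\RP^\infty,\ast)^K$. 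Then compute its cohomology by the standard Mayer--Vietoris/colimit argument, exactly as in the complex case with $\Z$ replaced by $\Z_2$. The map to $B\Z_2^n$ pulls the generators $t_1,\dots,t_n$ of $H^1(B\Z_2^n;\Z_2)$ back to the linear forms $\theta_i=\lambda_{i1}v_1+\cdots+\lambda_{im}v_m$.

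\emph{Step 2: a ring map into $H^\ast(M;\Z_2)$.} The composite $M\to DJ_\R(K)\to B\Z_2^n$ is null-homotopic, being the fibre inclusion followed by the projection. So the $\theta_i$ restrict to zero on $M$, and restriction induces a ring homomorphism
\[
\Z_2[K]/(\theta_1,\dots,\theta_n)=\Z_2[v_1,\dots,v_m]/(I_K+\cJ_\lambda)\longrightarrow H^\ast(M;\Z_2).
\]
The linear-independence condition on each face says precisely that $\theta_1,\dots,\theta_n$ is a linear system of parameters for $\Z_2[K]$. Since a simplicial sphere is Cohen--Macaulay over every field, $\Z_2[K]$ is a free module over $\Z_2[\theta_1,\dots,\theta_n]$. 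Consequently the quotient has Hilbert function given by the $h$-vector $(h_0,\dots,h_n)$ of $K$, and its total dimension is $\sum_i h_i$.

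\emph{Step 3: showing the map is an isomorphism.} This is the main obstacle. The base $B\Z_2^n$ is not simply connected, so neither Eilenberg--Moore nor a naive Leray--Hirsch argument applies directly, and the local system $H^\ast(M;\Z_2)$ over $B\Z_2^n$ need not be trivial. I would try to bypass this in two parts.

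\begin{itemize}
\item[(a)] Show $\dim_{\Z_2}H^\ast(M;\Z_2)=\sum_i h_i$. Use the cell structure of $M$ coming from $\R\cZ_K/\ker\lambda$, which has $\sum_i h_i$ cells after a shelling or Morse-theoretic argument (for polytopes, a generic height function on $P$ gives one cell per vertex, with index recorded by the $h$-vector), so that $\dim H^\ast\le\sum h_i$. Combine this with a lower bound from Step 4.
\item[(b)] Alternatively, argue directly with the Serre spectral sequence of $M\to DJ_\R(K)\to B\Z_2^n$. The freeness of $\Z_2[K]$ over $\Z_2[t]$ shows that $H^\ast(DJ_\R(K))$ is as large as possible. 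Compare $E_\infty$ with $E_2=H^\ast(B\Z_2^n;\mathcal H^\ast(M))$ to force both the collapse of the spectral sequence and the surjectivity of the edge map onto $H^\ast(M)$, with kernel generated by the $\theta_i$.
\end{itemize}

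\emph{Step 4: conclusion.} Once surjectivity or injectivity is established, the dimension count of Step 2 against the bound in Step 3(a) finishes the proof. I would aim to prove surjectivity first, via the spectral sequence in (b), and then conclude by comparing total dimensions.
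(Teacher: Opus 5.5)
The paper gives no proof of this statement; it quotes it from Davis--Januszkiewicz. Your outline is the same as theirs: Borel construction, $H^\ast(DJ_\R(K);\Z_2)\cong\Z_2[K]$, the $\theta_i$ as a linear system of parameters, and comparison with the $h$-vector. Steps~1 and~2 are fine.

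\textbf{Gap in the closing argument.} Surjectivity of $\Z_2[K]/(\theta_1,\dots,\theta_n)\to H^\ast(M;\Z_2)$ and the cell-count bound $\dim H^\ast(M;\Z_2)\le\sum_i h_i$ point in the \emph{same} direction. Both say the target is no bigger than the source, so together they prove nothing. To finish from surjectivity you need the reverse inequality $b^q_{\Z_2}(M)\ge h_q$, or injectivity, which you never address. There is no ``lower bound from Step~4''.

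Route~(b) is not an alternative to~(a) either. A size comparison between $E_2$ and $E_\infty$ can only force degeneration if you already have an upper bound on $E_2$. That needs both a bound on $\dim H^q(M;\Z_2)$ and control of the twisted coefficients, which you flag but do not resolve.

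\textbf{How your pieces can be made to fit.} A finite $2$-group acts unipotently on any finite-dimensional $\Z_2$-vector space: there is an invariant full flag with trivial subquotients. The long exact sequences then give
\[
\dim H^p(B\Z_2^n;\mathcal{H}^q(M))\le\binom{p+n-1}{n-1}\dim H^q(M;\Z_2).
\]
Write $P_X(t)$ for mod~$2$ Poincar\'e series and $h(t)=\sum_i h_it^i$. Using your cell count in the last step, you get, coefficientwise,
\[
\frac{h(t)}{(1-t)^n}=P_{DJ_\R(K)}(t)\le P_{E_2}(t)\le\frac{P_M(t)}{(1-t)^n}\le\frac{h(t)}{(1-t)^n}.
\]
Hence all inequalities are equalities, which gives:
\begin{itemize}
\item $b^q_{\Z_2}(M)=h_q$;
\item the spectral sequence collapses;
\item $H^0(B\Z_2^n;\mathcal{H}^q(M))=H^q(M;\Z_2)$, so the monodromy is trivial;
\item the edge map, i.e.\ restriction, is onto.
\end{itemize}
Only now do surjectivity and equality of dimensions in each degree give the ring isomorphism.

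Davis--Januszkiewicz obtain $b^q_{\Z_2}(M)=h_q$ directly. Their height-function cell structure is perfect mod~$2$: each cell closure is the small cover over a face, a closed submanifold, so its unique top cell is a mod~$2$ cycle.

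\textbf{Generality.} A height function or a shelling is not available for an arbitrary simplicial sphere, which is the generality in which the statement is formulated. Here your dismissal of Eilenberg--Moore is too hasty. By Dwyer's strong convergence theorem, the Eilenberg--Moore spectral sequence of $M\to DJ_\R(K)\to B\Z_2^n$ with $\Z_2$ coefficients converges, because $\pi_1(B\Z_2^n)=\Z_2^n$ is a $2$-group and so acts nilpotently on $H_\ast(M;\Z_2)$. Since $\Z_2[K]$ is Cohen--Macaulay, it is free over $\Z_2[\theta_1,\dots,\theta_n]$. So the $E_2$-term $\Tor_{\Z_2[t_1,\dots,t_n]}(\Z_2[K],\Z_2)$ is concentrated in homological degree~$0$. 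This yields $H^\ast(M;\Z_2)\cong\Z_2[K]/(\theta_1,\dots,\theta_n)$ as algebras, with no cell count at all.
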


We denote by $\row \Lambda$ the row space of $\Lambda$. 
Each element $\omega \in \row \Lambda$ can be identified with a subset of $[m]$ via the standard correspondence between $\Z_2^m$ and the power set of $[m]$.
With this identification, $\omega$ determines the full subcomplex~$K_\omega$ of~$K$.

\begin{theorem}\cite[Theorem~1.1]{Cai-Choi2021}\label{CaiChoi}
    Let $K$ be a shellable simplicial sphere, and let~$M = M(K,\lambda)$ be a real toric space.
    For the integral cohomology group~$H^\ast(M;\Z)$ of~$M$,
    \begin{enumerate}
      \item the free part (respectively, for an odd prime $p$, the $p^k$-torsion part) of $H^i(M;\Z)$ coincides with that of $\underset{\omega \in \row\Lambda}{\bigoplus}\widetilde{H}^{i-1}(K_\omega;\Z)$, and
      \item the $2^{k+1}$-torsion subgroup of $H^i(M;\Z)$ coincides with the $2^{k}$-torsion subgroup of~$\underset{\omega \in \row\Lambda}{\bigoplus}\widetilde{H}^{i-1}(K_\omega;\Z)$.
    \end{enumerate}
\end{theorem}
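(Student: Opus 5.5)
We would split the computation by localizing at primes. Since $H^\ast(M;\Z)$ is finitely generated, it suffices to determine $H^\ast(M;\Z_{(p)})$ for every prime $p$ and to identify the free rank. The common starting point is the cubical cell structure of $\R\cZ_K\subset (D^1)^m$. Its cellular cochain complex is a complex of $\Z[\Z_2^m]$-modules. Hochster's decomposition applies to it: the reduced part of $C^\ast(\R\cZ_K)$ splits as $\bigoplus_{J\subset[m]}\widetilde C^{\ast-1}(K_J)$, and $\Z_2^m$ acts on the summand indexed by $J$ through the sign character $\chi_J(g)=(-1)^{\langle g,J\rangle}$.

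\emph{Part (1) and odd torsion.} When $2$ is invertible in the coefficient ring $R$ (for instance $R=\Q$ or $\Z_{(p)}$ with $p$ odd), the transfer for the free action of the finite $2$-group $\ker\lambda$ gives
\[
H^\ast(M;R)\cong H^\ast(\R\cZ_K;R)^{\ker\lambda}.
\]
The invariant summands are exactly those $J$ for which $\chi_J$ is trivial on $\ker\lambda$, i.e.\ $J\in(\ker\lambda)^\perp=\row\Lambda$. Hence $H^i(M;R)\cong\bigoplus_{\omega\in\row\Lambda}\widetilde H^{i-1}(K_\omega;R)$. Taking $R=\Q$ identifies the free part, and taking $R=\Z_{(p)}$ for odd $p$ identifies the $p^k$-torsion. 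This step uses neither shellability nor any extension argument.

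\emph{Part (2), the prime $2$.} Here the transfer fails, and we would instead build an explicit cochain model for $M$ over $\Z_{(2)}$. The cells of $M$ are the $\ker\lambda$-orbits of cubical cells. Choosing orbit representatives adapted to the characteristic matrix, a coboundary between two cells lying in the same ``$\omega$-sector'' carries coefficient $1\pm\chi(g)$, which is $0$ or $2$. The aim is a filtration of $C^\ast(M;\Z_{(2)})$ whose associated graded pieces are copies of $\widetilde C^{\ast-1}(K_\omega)$ for $\omega\in\row\Lambda$, glued by maps divisible by $2$. Shellability of $K$ is used here. Adding facets one at a time along a shelling order $\sigma_1,\dots,\sigma_N$ attaches one new restriction face at each step. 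The mod $2$ Betti numbers of $M$ are then governed by the $h$-vector (each shelling step contributes one $\Z_2$-class in the degree of its restriction face), and this fixes the total size of $H^\ast(M;\Z_2)$. Comparing this with the rational count from Part (1), via the universal coefficient theorem, bounds the number of $2$-primary cyclic summands.

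The main obstacle is the exact shift in exponent: a $\Z/2^k$ summand of $\bigoplus_\omega\widetilde H^{i-1}(K_\omega)$ should become a $\Z/2^{k+1}$ summand of $H^i(M)$. We expect that $1\pm\chi(g)$ coefficients cannot be resolved from Betti numbers alone. So we would first try to show that the gluing maps are exactly multiplication by $2$ between consecutive $\omega$-sectors, so that the model is a mapping cone of $2\colon D\to D$ twisted by the sector structure. Then a Bockstein spectral sequence argument converts each $2^k$-torsion class of $D$ into a $2^{k+1}$-torsion class of $M$. The shelling would be used to show inductively that no further extensions occur, by checking at each shelling step that the new restriction face contributes a single cell pair whose coboundary is $\pm2$ times a cycle.
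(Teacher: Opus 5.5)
You are right to expect the hard part at the prime $2$. Part (1) is sound and takes a genuinely different route from the cited proof. After inverting $2$, the cubical cochains of $\R\cZ_K$ split into $\chi_J$-isotypic pieces. Over $\Z$ they do not, but you only use this with $R\ni 1/2$. The transfer then gives the $\ker\lambda$-invariants, and $(\ker\lambda)^\perp=\row\Lambda$. This is the Suciu--Trevisan/Choi--Park argument and needs no shelling.

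The gap is Part (2). It states intentions rather than an argument, and the structures you propose to aim for cannot produce the answer:
\begin{itemize}
\item Suppose $C^\ast(M;\Z_{(2)})$ had a filtration with graded pieces $\widetilde C^{\ast-1}(K_\omega)$, carrying their simplicial coboundaries and glued by maps divisible by $2$. Then $H^\ast(M;\Z_2)\cong\bigoplus_\omega\widetilde H^{\ast-1}(K_\omega;\Z_2)$. For $M=\RP^2$, every $K_\omega$ with $\omega\neq\emptyset$ is an edge, so the right side has total dimension $1$, not $3$.
\item A mapping cone of $2\colon D\to D$ on a free complex is quasi-isomorphic to $D/2D$. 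Its cohomology is a $\Z_2$-vector space, with no free part and no $\Z_4$.
\item The ``$\omega$-sectors'' with coefficients $1\pm\chi(g)$ need the idempotents $|G|^{-1}\sum_g\chi(g)g$, which do not exist over $\Z_{(2)}$. In the actual quotient cubical complex every incidence number is $0$ or $\pm1$. This is because opposite faces of a cube cell in direction $i$ lie in different $\ker\lambda$-orbits, since $\lambda$ is injective on faces of $K$.
\item As you note yourself, Betti numbers only bound the number of $2$-primary summands, not their orders.
\end{itemize}

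The missing idea is a model in which the entire integral coboundary, not just a gluing map, is $2$ times a coboundary computing $\bigoplus_\omega\widetilde H^{\ast-1}(K_\omega;\Z)$. The paper only quotes this theorem, but this is essentially what the shelling supplies in Cai--Choi's proof. It gives a cell structure on $M$ with one cell of dimension $|r(\sigma_j)|$ per facet $\sigma_j$. Hence there are $h_i(K)=b^i_{\Z_2}(M)$ cells in dimension $i$, so the mod $2$ coboundary vanishes and $\delta=2\delta'$ over $\Z$.

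Next, $\row\Lambda$ restricts bijectively onto $\Z_2^{\sigma_j}$. So each $\sigma_j$ is matched with the unique $\omega$ satisfying $\omega\cap\sigma_j=r(\sigma_j)$. Under this matching, $(C^\ast,\delta')$ becomes $\bigoplus_\omega C^{\ast-1}_{\crit}(K_\omega)$, the critical-simplex complexes quoted in Section~\ref{sec:torsionfree}, whose cohomology is $\bigoplus_\omega\widetilde H^{\ast-1}(K_\omega;\Z)$.

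Then $H^\ast(M;\Z)=\ker\delta'/2\im\delta'$. A Smith normal form computation turns each summand $\Z/d$ of $H(C^\ast,\delta')$, including $d=1$, into $\Z/2d$. This gives (2), and (1) again. Equivalently, the Bockstein spectral sequence of $M$ is that of $C^\ast_{\crit}$ shifted by one page, which is the shift you were after. Your remark that each shelling step contributes one class in the degree of its restriction face is the seed of this, but you use it only for counting.
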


\subsection{A pullback from the simplex}
In this paper, in order to avoid cumbersome notation, we formulate all results for small covers.
In fact, none of the arguments uses the polytopality of the underlying simplicial sphere.
Therefore, most of the results remain valid for real toric spaces over a simplicial sphere~$K$, while Theorem~\ref{thm:odd-torsionfree-betti-characterization} requires the additional assumption that $K$ is shellable.

Let $K$ and $L$ be simplicial complexes dual to $n$-dimensional simple convex polytopes.
A small cover~$M(K,\lambda)$ is called a \emph{pullback} from~$(L,\lambda')$ if there exists a nondegenerate simplicial map~$c \colon K \to L$ such that
\[
\lambda=\lambda'\circ c,
\]
up to basis change in $\Z_2^n$.

Let $\Delta$ denote the $(m-1)$-simplex on the vertex set $[m]$, and let $\partial\Delta$ be its boundary complex.
Define a characteristic map $\lambda_{\Delta} \colon [m] \to \Z_2^m$ by
\[
\lambda_{\Delta}(i) = e_i,
\]
where $e_i$ is the $i$th standard basis vector of $\Z_2^m$.
Similarly, define a characteristic map
\[
\lambda_{\partial\Delta} \colon [m] \to \Z_2^{m-1}
\]
by
\[
\lambda_{\partial\Delta}(i)=e_i \quad (1\le i\le m-1), \quad \text{and} \quad \lambda_{\partial\Delta}(m)=e_1+\cdots+e_{m-1}.
\]
A pullback from $(\Delta,\lambda_{\Delta})$ is called a \emph{pullback from the linear model}, and a pullback from $(\partial\Delta,\lambda_{\partial\Delta})$ is called a \emph{pullback from the simplex}.

\begin{proposition}\label{prop:pullbackiff}
Let~$M$ be an $n$-dimensional small cover with characteristic map~$\lambda$.
  \begin{enumerate}
    \item\label{prop:pullbackiff1} $M$ is a pullback from the linear model if and only if, up to basis change in~$\Z_2^n$, $\im(\lambda)=\{e_1,\ldots,e_n\}$.
\item\label{prop:pullbackiff2} $M$ is a pullback from the simplex if and only if, up to basis change in~$\Z_2^n$,
    $$
    \im(\lambda) \subseteq \{e_1,\ldots,e_n,e_1+\cdots+e_n\}.
    $$
  \end{enumerate}
\end{proposition}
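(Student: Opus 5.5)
The plan is to unwind the definition of a pullback. By definition, $M=M(K,\lambda)$ is a pullback from $(L,\lambda')$ when there is a nondegenerate simplicial map $c\colon K\to L$ with $\lambda=\lambda'\circ c$, up to basis change. Both parts of Proposition~\ref{prop:pullbackiff} then come down to one observation: a vertex map $c$ into a simplex-type complex is automatically simplicial and nondegenerate exactly when the composite $\lambda'\circ c$ is injective on each simplex of $K$. The characteristic condition on $\lambda$ supplies this. I would argue each direction directly.

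For the ``only if'' directions, suppose $\lambda=\lambda'\circ c$. Then $\im(\lambda)\subseteq\im(\lambda')$. In case \eqref{prop:pullbackiff2} we have $\im(\lambda_{\partial\Delta})=\{e_1,\dots,e_n,e_1+\cdots+e_n\}$ with $m=n+1$, which gives the claim. In case \eqref{prop:pullbackiff1} we have $\im(\lambda_\Delta)=\{e_1,\dots,e_m\}\subseteq\Z_2^m$. Here I must use the convention that the pullback keeps dimension $n$: the target model $(\Delta,\lambda_\Delta)$ with $\Delta$ an $(n-1)$-simplex, so $m=n$. Equality (rather than just inclusion) then follows because $K$ has a facet on which $\lambda$ gives a basis, so every $e_i$ is attained.

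For the ``if'' directions, suppose after a basis change $\im(\lambda)\subseteq\{e_1,\dots,e_n,e_1+\cdots+e_n\}$. Define $c(j)=i$ if $\lambda(j)=e_i$, and $c(j)=n+1$ if $\lambda(j)=e_1+\cdots+e_n$. Then $\lambda=\lambda_{\partial\Delta}\circ c$ holds by construction. It remains to check that $c$ is a nondegenerate simplicial map $K\to\partial\Delta^n$.

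Take a simplex $\sigma\in K$. Its vectors $\lambda(\sigma)$ are linearly independent, hence distinct, so $c$ is injective on $\sigma$. This injectivity is the nondegeneracy. Next, $c(\sigma)$ must be a proper subset of $[n+1]$. If $c(\sigma)=[n+1]$, the image would contain all of $e_1,\dots,e_n,e_1+\cdots+e_n$, and these are linearly dependent, contradicting independence. Hence $c(\sigma)\in\partial\Delta$ and $c$ is simplicial. Part \eqref{prop:pullbackiff1} is the same argument with target $\Delta$, where every subset is a face, so no check beyond injectivity is needed.

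The only potential obstacle is bookkeeping. One point is the interpretation ``up to basis change'', which is harmless since precomposing with a basis change commutes with the construction. The other is the dimension convention in \eqref{prop:pullbackiff1}, namely matching $\Z_2^m$ with $\Z_2^n$. Neither is a real mathematical difficulty. The heart of the proof is the linear dependence of the $n+1$ vectors in $\im(\lambda_{\partial\Delta})$, which forbids a simplex of $K$ from hitting all of them.
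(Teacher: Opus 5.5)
Your argument is correct, and it is the definitional unwinding the paper has in mind: the paper states Proposition~\ref{prop:pullbackiff} without proof, treating it as immediate from the definition of a pullback. Your two checks are all that is needed: linear independence of $\lambda(\sigma)$ makes $c$ injective on simplices, and $c(\sigma)$ is a proper subset of $[n+1]$. Two cosmetic points: for $n=1$ your rule for $c$ is ambiguous because $e_1=e_1+\cdots+e_n$, so just choose either value; and properness of $c(\sigma)$ also follows at once from $|\sigma|\le n$.
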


\begin{remark}
The formulas for the characteristic classes of $M$ may also be viewed as pullbacks of the corresponding formulas for $\RP^n$. 
Indeed, the pullback structure gives a natural map $M\longrightarrow \RP^n$.
Let $\tau \in H^1(M;\Z_2)$ be the pullback of the standard generator $u\in H^1(\RP^n;\Z_2)$.
Then the identities
$$
    w(\RP^n)=(1+u)^{n+1} \quad \text{ and } \quad \operatorname{Wu}_i(\RP^n)=\binom{n-i}{i}u^i
$$
pull back to
$$
    w(M)=(1+\tau)^{n+1} \quad \text{ and } \quad \operatorname{Wu}_i(M)=\binom{n-i}{i}\tau^i,
$$
respectively.
\end{remark}

\begin{example}\label{ex:Subsec3dim} 
    Let $M(K,\lambda)$ be a $3$-dimensional small cover.
    It is known~\cite{Nakayama-Nishimura2005} that a $3$-dimensional small cover $M(K,\lambda)$ is orientable if and only if,
    after a basis change in $\Z_2^3$,
    $$
        \im(\lambda) \subseteq \{e_1,e_2,e_3,e_1+e_2+e_3\}.
    $$
    By Proposition~\ref{prop:pullbackiff} \eqref{prop:pullbackiff2}, it follows that $3$-dimensional orientable small cover~$M(K,\lambda)$ is always a pullback from the simplex.  
\end{example}

\begin{example}\label{ex:SubsecBier} 
    Let $K$ be a simplicial complex on a finite set~$S$ such that $K\neq 2^S$.
    For each subset $I\subset S$, define
    $$
        \bar I=\{\bar i\mid i\in I\}.
    $$
    The \emph{Bier sphere}, introduced in~\cite{Bier1992}, of $K$ is the simplicial complex on the disjoint union~$S\sqcup\overline{S}$ defined as
    $$
        \Bier(K)=\{\sigma\cup\bar{\tau}\mid \sigma\in K,\ S \setminus \tau \notin K,\ \sigma\cap\tau=\emptyset\}.
    $$
    Every Bier sphere is a PL-sphere of dimension~$|S|-2$, and it was proved in~\cite{Bjorner-Paffenholz-Sjostrand-Ziegler2005} that every Bier sphere is shellable, but in general not polytopal.
    Moreover, a Bier sphere admits a canonical characteristic map~$\lambda_{\Bier(K)}$, introduced in~\cite{Ivan_Sergeev2024}, defined as follows.
    For~$S = \{s_1,\ldots,s_{\ell}\}$, define
    $$
        \lambda_{\Bier(K)}(s_i) = \lambda_{\Bier(K)}(\overline{s_i})=
        \begin{cases}
            e_i, & \mbox{if } 1 \leq i \leq \ell-1 \\
            e_1+ \cdots + e_{\ell-1}, & i = \ell,
        \end{cases}
    $$
    where~$e_i$ is the $i$th standard basis vector of~$\Z^{\ell-1}$.
    In fact, the characteristic map~$\lambda_{\Bier(K)}$ realizes $\Bier(K)$ as the underlying simplicial complex of a smooth complete fan~\cite{Ivan-Marinko-Rade2025}.
    By Proposition~\ref{prop:pullbackiff}\eqref{prop:pullbackiff2}, a real toric manifold corresponding to the pair~$(\Bier(K),\lambda_{\Bier(K)})$ is a pullback from the simplex.
\end{example}

\section{The first Steenrod square}\label{sec:Steenrod}

Let $X$ be a topological space.
Recall that the Steenrod squares are cohomology operations
\[
\Sq^i \colon H^q(X;\Z_2) \longrightarrow H^{q+i}(X;\Z_2), \qquad i \ge 0,
\]
and the total Steenrod square is defined by
\[
\Sq = \sum_{i \ge 0} \Sq^i.
\]
They satisfy the following properties.
\begin{itemize}
  \item $\Sq^0$ is the identity,
  \item $\Sq^i(x) = 0$ for $i > \deg x$,
  \item $\Sq^{\deg x}(x) = x^2$, and
  \item $\Sq(xy) = \Sq(x)\Sq(y)$.
\end{itemize}
In particular, the first Steenrod square
\[
\Sq^1\colon H^q(X;\Z_2)\longrightarrow H^{q+1}(X;\Z_2)
\]
is given by the mod $2$ reduction of the Bockstein homomorphism associated to the
short exact sequence
\begin{equation} \label{eq:ses}
    0\longrightarrow \Z \xrightarrow{\times 2} \Z \longrightarrow \Z_2 \longrightarrow 0.  
\end{equation}
In other words,
\[
\Sq^1=\rho\circ\beta,
\]
where
\[
\beta\colon H^q(X;\Z_2)\longrightarrow H^{q+1}(X;\Z)
\]
is the Bockstein homomorphism and
\[
\rho\colon H^{q+1}(X;\Z)\longrightarrow H^{q+1}(X;\Z_2)
\]
is the mod reduction $2$ map.

Let $M = M(K,\lambda)$ be a small cover that is a pullback from the simplex via a nondegenerate simplicial map $c \colon K\longrightarrow \partial\Delta^n$.
Let $v_1,\ldots,v_m \in H^1(M;\Z_2)$ be the standard generators.
For each~$i\in [n+1] = \{1,\ldots,n+1\}$, define
\[
\tau_i:=\sum_{c(j) = i} v_j \in H^1(M;\Z_2).
\]

\begin{lemma}\label{lem:tau_equal_RT}
    One has
    $$
        \tau_1=\tau_2=\cdots=\tau_{n+1}.
    $$
\end{lemma}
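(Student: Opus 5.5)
The plan is to read off the linear relations directly from the Davis--Januszkiewicz presentation $H^\ast(M;\Z_2)\cong \Z_2[v_1,\dots,v_m]/(I_K+\cJ_\lambda)$. Since $I_K$ is generated in degree at least $2$, the degree-one part of this quotient is $\Z_2\langle v_1,\dots,v_m\rangle$ modulo the span of the $n$ linear forms $\theta_k=\sum_{j}\lambda_{kj}v_j$, $k=1,\dots,n$. It therefore suffices to show that each difference $\tau_i-\tau_{n+1}$, $1\le i\le n$, is one of these linear forms. (Characteristic $2$ means differences and sums agree.)

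Because $M$ is a pullback from the simplex via $c$, we have $\lambda=\lambda_{\partial\Delta}\circ c$ after a basis change of $\Z_2^n$. A basis change replaces the generators of $\cJ_\lambda$ by an invertible linear combination of them, so the ideal, and hence the cohomology presentation, is unchanged. We may therefore assume $\lambda(j)=\lambda_{\partial\Delta}(c(j))$ on the nose. Then for each $j$ there are two cases: if $c(j)=k\le n$, the column $\lambda(j)$ is $e_k$; if $c(j)=n+1$, the column is $e_1+\cdots+e_n$.

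Now compute the $k$th generator of $\cJ_\lambda$ for $1\le k\le n$. The entry $\lambda_{kj}$ equals $1$ exactly when $c(j)=k$ or $c(j)=n+1$. Hence
\[
\theta_k=\sum_{c(j)=k}v_j+\sum_{c(j)=n+1}v_j=\tau_k+\tau_{n+1}.
\]
So $\tau_k=\tau_{n+1}$ in $H^1(M;\Z_2)$ for every $k\le n$, which gives $\tau_1=\cdots=\tau_{n+1}$.

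The only point needing care is the justification that the basis change in $\Z_2^n$ does not affect the relations. This holds because the ideal $\cJ_\lambda$ depends only on the row space of $\Lambda$. Everything else is a direct computation.
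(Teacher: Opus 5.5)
Your proposal is correct and is essentially the paper's proof: the $\ell$th row relation of $\cJ_\lambda$ reads $\tau_\ell+\tau_{n+1}=0$ in $H^1(M;\Z_2)$ for each $\ell\le n$. Your remark that a basis change in $\Z_2^n$ only replaces the generators of $\cJ_\lambda$ by invertible linear combinations, so the ideal is unchanged, correctly justifies a point the paper leaves implicit.
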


\begin{proof}
    For $\ell =1, \ldots, n$, the $\ell$th relation is 
    $$
        \sum_{c(j)=\ell}v_j+\sum_{c(j)=n+1}v_j=0,
    $$
    namely, $\tau_\ell+\tau_{n+1}=0 \in H^1(M;\Z_2)$.
\end{proof}

Set $\tau = \tau_1 = \cdots = \tau_{n+1} \in H^1(M;\Z_2)$.

\begin{lemma}\label{lem:square_generator_RT}    
    For every generator $v_j\in H^1(M;\Z_2)$,
    $$
    v_j^2=\tau v_j.
    $$
    In particular, for a homogeneous element $x \in  H^q(M;\Z_2)$, we have
    $$
        \Sq(x)=(1+ \tau)^q x.
    $$
\end{lemma}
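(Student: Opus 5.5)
The argument has two parts: first compute $v_j^2$ from the defining relations of $H^\ast(M;\Z_2)$, then derive the formula for the total square from multiplicativity of $\Sq$.

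\emph{Step 1: $v_j^2=\tau v_j$.} Fix $j$ and put $i=c(j)$. By Lemma~\ref{lem:tau_equal_RT}, $\tau=\tau_i=\sum_{c(k)=i}v_k$. Multiplying by $v_j$ gives
\[
\tau v_j=v_j^2+\sum_{c(k)=i,\ k\neq j}v_kv_j .
\]
So it suffices to show that $v_kv_j=0$ whenever $k\neq j$ and $c(k)=c(j)$. Because $c$ is a nondegenerate simplicial map, it is injective on every simplex of $K$. Hence $\{j,k\}$ cannot be an edge of $K$, so it is a non-face, and $v_jv_k$ lies in the Stanley--Reisner ideal $I_K$. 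Therefore $v_jv_k=0$ in $H^\ast(M;\Z_2)$ by the Davis--Januszkiewicz presentation. This proves $v_j^2=\tau v_j$.

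\emph{Step 2: the total square.} For a degree-one class, $\Sq(v_j)=v_j+v_j^2=v_j+\tau v_j=(1+\tau)v_j$. The ring $H^\ast(M;\Z_2)$ is generated by $v_1,\dots,v_m$, so every homogeneous $x$ of degree $q$ is a sum of monomials $v_{j_1}\cdots v_{j_q}$. Multiplicativity of $\Sq$ (the Cartan formula) gives
\[
\Sq(v_{j_1}\cdots v_{j_q})=\prod_{s=1}^q(1+\tau)v_{j_s}=(1+\tau)^q\,v_{j_1}\cdots v_{j_q}.
\]
$\Sq$ is additive and the factor $(1+\tau)^q$ is the same for every degree-$q$ monomial, so summing yields $\Sq(x)=(1+\tau)^qx$. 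For $q=0$ the formula is trivial.

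The only step needing care is the vanishing of $v_jv_k$ in Step 1. It uses exactly the nondegeneracy of $c$, that is, injectivity on edges, and the rest of the argument is formal.
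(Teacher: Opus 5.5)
Your proof is correct and follows the paper's argument essentially step for step. You expand $\tau v_j=\tau_{c(j)}v_j$ and kill the cross terms $v_kv_j$ via nondegeneracy of $c$ and the Stanley--Reisner ideal, then apply the Cartan formula to degree-$q$ monomials and extend by linearity.
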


\begin{proof}
    Suppose that $c(j)=i$. 
    Then, by Lemma~\ref{lem:tau_equal_RT},
    $$
        \tau v_j= \tau_i v_j=\left(\sum_{c(k)=i} v_k\right)v_j.
    $$
    Let $c(k)=c(j)=i$ for some $k\neq j$.
    Since $c$ is nondegenerate, $\{j,k\}$ is not an edge of $K$. 
    Hence $v_kv_j\in I_K$, and so $v_kv_j=0$ in $H^\ast(M;\Z_2)$.
    
    Therefore, for each degree-one generator $v_j$, we have
    $$
        \Sq(v_j)=v_j+\Sq^1(v_j)=v_j+v_j^2 = (1+\tau)v_j.
    $$
    
    If $x=v_{i_1}\cdots v_{i_q}$ is a monomial of degree $q$, then by the multiplicativity of the total Steenrod square,
    $$
        \Sq(x)=\prod_{s=1}^q \Sq(v_{i_s}) =\prod_{s=1}^q (1+\tau)v_{i_s} =(1+\tau)^q x.
    $$
    By linearity, the same formula holds for every homogeneous $x\in H^\ast(M;\Z_2)$.
\end{proof}

For a facet $\sigma=\{u_1,\dots,u_n\}$ and for each $i \in [n] = \{1,\ldots,n\}$, let $p_i(\sigma)$ be the unique vertex such that
$$
    \bigl(\sigma\setminus\{u_i\}\bigr)\cup\{p_i(\sigma)\}
$$
is a facet. 
Since $\{\lambda(u_1),\dots,\lambda(u_n)\}$ is a basis of $\Z_2^n$,
there exists a unique subset~$S_i^\sigma \subset [n]$ such that
\[
\lambda\bigl(p_i(\sigma)\bigr)
= \sum_{j \in S_i^\sigma} \lambda(u_j).
\]
From Proposition~\ref{prop:pullbackiff} \eqref{prop:pullbackiff2}, one can see that $M$ is a pullback from the simplex if and only if, for every facet $\sigma$ and every $i\in[n]$, one has
$$
    S_i^\sigma=\{i\} \quad \text{ or } \quad S_i^\sigma=[n].
$$
In particular, by Proposition~\ref{prop:pullbackiff} \eqref{prop:pullbackiff1}, if $S_i^\sigma = \{i\}$ for all $\sigma$ and $i \in [n]$, then $M$ is a pullback from the linear model.

\begin{theorem}\label{thm:Sq1_H2_characterizes_simplex_pullback}
    Let~$M$ be a small cover.
    Then the following are equivalent.
    \begin{enumerate}
        \item\label{eq:FT1} $M$ is a pullback from the simplex;
        \item\label{eq:FT2} $\Sq^1$ vanishes on $H^{2k}(M;\Z_2)$ for all $k\geq 0$; and
        \item\label{eq:FT3} $\Sq^1$ vanishes on $H^2(M;\Z_2)$.
    \end{enumerate}
\end{theorem}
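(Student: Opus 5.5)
We prove the cycle \eqref{eq:FT1}$\Rightarrow$\eqref{eq:FT2}$\Rightarrow$\eqref{eq:FT3}$\Rightarrow$\eqref{eq:FT1}. The middle implication is trivial.

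For \eqref{eq:FT1}$\Rightarrow$\eqref{eq:FT2}, we use Lemma~\ref{lem:square_generator_RT}. For homogeneous $x$ of degree $q$ it gives $\Sq(x)=(1+\tau)^q x$. Comparing degree $q+1$ parts yields $\Sq^1(x)=q\,\tau x$. This vanishes mod $2$ when $q=2k$ is even.

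The main work is \eqref{eq:FT3}$\Rightarrow$\eqref{eq:FT1}. We argue contrapositively using the local criterion stated just before the theorem. Suppose $M$ is not a pullback from the simplex. Then there are a facet $\sigma=\{u_1,\dots,u_n\}$ and an index $i$ with $S_i^\sigma\neq\{i\}$ and $S_i^\sigma\ne[n]$. Note that $i\in S_i^\sigma$, since otherwise $\lambda(p_i(\sigma))$ together with $\lambda(u_j)$, $j\neq i$, would not be a basis. Hence there are indices $j\ne i$ with $j\in S_i^\sigma$ and $k\notin S_i^\sigma$, and necessarily $k\neq i$.

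We change basis so that $\lambda(u_\ell)=e_\ell$. The relations in $\cJ_\lambda$ then express $v_{u_\ell}$ as $\sum_{w\notin\sigma}\lambda_{\ell w}v_w$. We take as test class the degree-two element $x=v_{u_j}v_{u_k}$, or some product of this kind, and compute
\[
\Sq^1(x)=v_{u_j}^2v_{u_k}+v_{u_j}v_{u_k}^2 .
\]
To show this is nonzero, we pair against a degree-$3$ computation localized near $\sigma$. Let $p=p_i(\sigma)$ and let $\sigma'=(\sigma\setminus\{u_i\})\cup\{p\}$. The monomials $v_{u_i}\prod_{\ell\neq i}v_{u_\ell}$ and $v_p\prod_{\ell\ne i}v_{u_\ell}$ both equal the top class, since every facet monomial is the fundamental class in the Davis--Januszkiewicz ring. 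We use the relation for coordinate $\ell$ multiplied by the monomial $\prod_{s\neq i,\ell}v_{u_s}$. Only vertices $w$ with $\{w\}\cup(\sigma\setminus\{u_i,u_\ell\})$ a face survive, so the products collapse to the vertices $u_i,u_\ell,p$ and the other vertex opposite across the ridge $\sigma\setminus\{u_\ell\}$. This gives $v_{u_\ell}^2\prod_{s\neq\ell}v_{u_s}$ explicitly in terms of $\lambda$-coefficients. In particular, $v_{u_\ell}^2$ times complementary monomials detects whether $\ell\in S_i^\sigma$.

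The intended computation is as follows. Multiply $\Sq^1(v_{u_j}v_{u_k})$ by the degree-$(n-3)$ monomial $\prod_{s\ne i,j,k}v_{u_s}$ and evaluate on the fundamental class. Using the local expansion, the two terms contribute $[k\in S_i^\sigma]$-type and $[j\in S_i^\sigma]$-type coefficients with different signs of asymmetry, so the sum is $1$ exactly when $j\in S_i^\sigma$ and $k\notin S_i^\sigma$. Some care is needed: the square $v_{u_j}^2$ times a codimension-three face monomial involves links of codimension-two faces, which are polygons rather than single edges. The cleaner route is therefore to choose the test class as $x=v_{u_j}v_{u_i}$ or $v_{u_k}v_{u_i}$ and multiply by $\prod_{s\neq i,j,k}v_{u_s}$. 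This reduces to a computation in the cohomology of the restriction to the $2$-dimensional link of the codimension-$3$ face $\sigma\setminus\{u_i,u_j,u_k\}$, which is the boundary of a $3$-dimensional polytope.

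This localization step is the main obstacle. We would handle it via naturality: the face submanifold corresponding to that face is a $3$-dimensional small cover $N$ whose characteristic map is the projection of $\lambda$. If $M$ satisfies \eqref{eq:FT3}, then by multiplying with the Poincaré dual monomials, $N$ should satisfy $\Sq^1=0$ on $H^2(N;\Z_2)$. By Wu's formula, $\Sq^1$ on $H^2$ of a closed $3$-manifold is multiplication by $w_1$, so $N$ is orientable. Example~\ref{ex:Subsec3dim} then forces the projected image of $\lambda$ into $\{e_i,e_j,e_k,e_i+e_j+e_k\}$. The projected $\lambda(p_i(\sigma))$ restricted to coordinates $i,j,k$ is $e_i+e_j$, a contradiction. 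Justifying the passage from \eqref{eq:FT3} on $M$ to \eqref{eq:FT3} on $N$, through the naturality of $\Sq^1$ under the inclusion $N\hookrightarrow M$ and the surjectivity of restriction $H^2(M)\to H^2(N)$, is the step to check carefully.
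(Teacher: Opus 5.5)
Your final argument for (3)$\Rightarrow$(1) is correct, but it is not the paper's route. The obstacle that pushed you off the direct computation is illusory. The paper carries out exactly the computation you called ``intended''. Put $R'=\prod_{r\neq i,j,k}v_{u_r}$ and $R=\prod_{r\neq i}v_{u_r}$, the monomial of the ridge $\sigma\setminus\{u_i\}$. Then $R'\Sq^1(v_{u_j}v_{u_k})=R'v_{u_j}v_{u_k}(v_{u_j}+v_{u_k})=R(v_{u_j}+v_{u_k})$. Expand $v_{u_j}+v_{u_k}$ via the relations as $\sum v_q$ over $q\notin\sigma$ with $|\supp_\sigma(q)\cap\{j,k\}|=1$. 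The product $Rv_q$ is nonzero only when $q$ lies in the link of the ridge, i.e.\ only for $q=p$, and $p$ does occur in the sum. So the result is the facet monomial $Rv_p\neq 0$. Only the link of a codimension-one face ever enters, never a polygon. Your paragraph multiplying the relation by $\prod_{s\neq i,\ell}v_{u_s}$ has a degree slip, and its claim that the link collapses to four vertices is false. It plays no role in your final argument and should be dropped.

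Your actual route restricts to the $3$-dimensional face submanifold $N$ over $\theta=\sigma\setminus\{u_i,u_j,u_k\}$. Wu's formula ($\Sq^1y=w_1(N)y$ on $H^2(N;\Z_2)$) and Poincar\'e duality then make $N$ orientable. This contradicts the Nakayama--Nishimura criterion, via $\bar\lambda(p)=e_i+e_j$ alongside $e_i,e_j,e_k$. The step you flag does hold. $H^*(N;\Z_2)$ is generated in degree one by the classes dual to $N\cap M_w$, for $w$ in the link of $\theta$, and these are the restrictions of $v_w\in H^1(M;\Z_2)$. Algebraically, $\Z_2[K]/\cJ_\lambda\to\Z_2[\mathrm{lk}\,\theta]/\cJ_{\bar\lambda}$ is onto. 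So naturality of $\Sq^1$ along $N\hookrightarrow M$ transfers (3) to $N$. Argue it this way rather than by ``multiplying with Poincar\'e dual monomials'', since Gysin maps do not commute with $\Sq^1$.

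Comparing the two: the paper's argument takes a few lines inside the Davis--Januszkiewicz ring, is self-contained, and exhibits an explicit class with nonzero $\Sq^1$. Yours imports the face-submanifold structure, Wu's formula and the Nakayama--Nishimura theorem. In return it explains the phenomenon conceptually: failure of the pullback condition is witnessed by a non-orientable $3$-dimensional face submanifold.
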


\begin{proof}
    The implication \eqref{eq:FT1} $\Rightarrow$ \eqref{eq:FT2} follows from Lemma~\ref{lem:square_generator_RT}, since for $x \in H^q(M;\Z_2)$
    $$
    \Sq^1(x) = \binom{q}{1}\tau x,
    $$
    which vanishes whenever $q$ is even.
    The implication \eqref{eq:FT2}$\Rightarrow$\eqref{eq:FT3} is obvious.
    
    Let us show the implication \eqref{eq:FT3} $\Rightarrow$ \eqref{eq:FT1}.
    Assume that $M(K,\lambda)$ is not a pullback from the simplex. 
    Then, there exist a facet $\sigma=\{u_1,\dots,u_n\}$ and an index $i\in[n]$ such that, after a change of basis of~$\Z_2^n$, we have
    $$
        \lambda(u_1)=e_1,\dots,\lambda(u_n)=e_n,
    $$ and $\lambda(p_i (\sigma))=\sum_{j\in S_i^\sigma}e_j$ for a subset $S_i^\sigma \subset[n]$ satisfying
    $$
        S_i^\sigma \neq \{i\} \quad \text{ and }\quad S_i^\sigma \neq [n].
    $$
    
    Set $p:=p_i (\sigma)$.
    Since $(\sigma\setminus\{u_i\})\cup\{p\}$ is a facet, the vector $\lambda(p)$ is not contained in the span of $\{e_j\mid j\neq i\}$, so necessarily $i\in S_i^\sigma$.
    Choose
    $$
        s\in S_i^\sigma \setminus\{i\} \quad \text{ and } \quad t \in [n]\setminus S_i^\sigma.
    $$
    We claim that
    $$
        \Sq^1(v_{u_s} v_{u_t})= v_{u_s} v_{u_t}(v_{u_s} + v_{u_t})\neq 0.
    $$
    
    For a vertex $q \in V(K)$, let $\supp_\sigma(q)\subset[n]$ be the support of $\lambda(q)$ with respect to the basis
    $$
        \lambda(u_1)=e_1,\dots,\lambda(u_n)=e_n,
    $$
    that is,
    $$
        \lambda(q)=\sum_{j\in \supp_\sigma(q)} e_j.
    $$
    
    For each $r\in[n]$, the $r$th row relation in $J_\lambda$ gives
    $$
        v_{u_r} = \sum_{\substack{q\notin \sigma\\ r\in \supp_\sigma(q)}} v_q.
    $$
    Hence
    $$
    v_{u_s}+v_{u_t} = \sum_{\substack{q\notin \sigma\\ |\supp_\sigma(q)\cap\{s,t\}|=1}} v_q.
    $$
    Since $s\in S_i^\sigma$ and $t\notin S_i^\sigma$, the vertex $p_i(\sigma)$ appears in this sum.
    
    Now let
    $$
        R':=\prod_{r\in[n]\setminus\{i,s,t\}} v_{u_r} \quad \text{ and } \quad R := R' v_{u_s} v_{u_t}.
    $$
    Then
    $$
        R' \Sq^1(v_{u_s}v_{u_t}) = R(v_{u_s}+v_{u_t}) = \sum_{\substack{q\notin \sigma\\ |\supp_\sigma(q)\cap\{s,t\}|=1}} Rv_q.
    $$
    A term $Rv_q$ is nonzero if and only if
    $$
        \bigl(\sigma\setminus\{u_i\}\bigr)\cup\{q\}
    $$
    is a facet.
    Since $K$ is a simplicial sphere, each codimension one face is contained in exactly two facets. 
    Hence the only nonzero term in the above sum is the one corresponding to $q=p$, and so
    $$
        R'  \Sq^1(v_{u_s}v_{u_t}) = R v_{p}\neq 0.
    $$
    Therefore
    $$
        \Sq^1(v_{u_s}v_{u_t})\neq 0
    $$
    as desired.
\end{proof}

\begin{corollary} \label{cor:pullback_linear_model}
    Let~$M$ be a small cover.
    Then the following are equivalent.
    \begin{enumerate}
        \item\label{eq:SS1} $M$ is a pullback from the linear model;
        \item\label{eq:SS2} $H^\ast(M;\Z)$ is torsion-free; and
        \item\label{eq:SS3} $\Sq^1$ vanishes on $H^1(M;\Z_2)$.
    \end{enumerate}
\end{corollary}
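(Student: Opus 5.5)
The plan is to establish the cycle of implications (1) $\Rightarrow$ (2) $\Rightarrow$ (3) $\Rightarrow$ (1). Where possible I reuse the mechanism already used in the proof of Theorem~\ref{thm:Sq1_H2_characterizes_simplex_pullback}, namely the computation of $\Sq$ via the generators $v_j$ and the facet-exchange argument.

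For (1) $\Rightarrow$ (3), I take a pullback from the linear model. By Proposition~\ref{prop:pullbackiff}\eqref{prop:pullbackiff1}, after a basis change we have $\lambda=\lambda_\Delta\circ c$ with $c\colon K\to\Delta^{n-1}$ nondegenerate. For each $i\in[n]$, the $i$th row relation then reads $\sum_{c(j)=i}v_j=0$. As in Lemma~\ref{lem:square_generator_RT}, nondegeneracy gives $v_kv_j=0$ whenever $c(k)=c(j)$ and $k\ne j$. Hence
\[
v_j^2=v_j\sum_{c(k)=c(j)}v_k=0,
\]
so $\Sq^1 v_j=0$. Since the $v_j$ span $H^1(M;\Z_2)$, it follows that $\Sq^1$ vanishes on $H^1(M;\Z_2)$.

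For (3) $\Rightarrow$ (1), I argue contrapositively, mimicking the proof of Theorem~\ref{thm:Sq1_H2_characterizes_simplex_pullback}. Suppose $M$ is not a pullback from the linear model. Then there are a facet $\sigma$ and an index $i$ with $S_i^\sigma\neq\{i\}$. As before, $i\in S_i^\sigma$, so I may choose $s\in S_i^\sigma\setminus\{i\}$. I then compute $\Sq^1(v_{u_s})=v_{u_s}^2$. Using the $s$th row relation, $v_{u_s}=\sum_{q\notin\sigma,\ s\in\supp_\sigma(q)}v_q$. I multiply by $R:=\prod_{r\neq i}v_{u_r}$, which contains $v_{u_s}$, so that
\[
R\,v_{u_s}=\sum_{q\notin\sigma,\ s\in \supp_\sigma(q)} R\,v_q .
\]
By the pseudomanifold property, the only nonzero term is $q=p_i(\sigma)$, since $s\in S_i^\sigma$. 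Moreover $Rv_p\ne0$ because it is the class of a facet monomial, which generates the top degree. Therefore $v_{u_s}^2\neq0$, which contradicts (3). This step is the only delicate one, and it is essentially the same as the $H^2$ computation, now with one generator instead of two.

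For (1) $\Leftrightarrow$ (2), the implication (1) $\Rightarrow$ (2) is the classical result of~\cite{Davis-Januszkiewicz1991}. Alternatively, it follows from Theorem~\ref{CaiChoi}: under (1), $\row\Lambda$ consists of unions of fibers of $c$, each $K_\omega$ is then a full subcomplex pulled back from a face of the simplex, and such a subcomplex is contractible or empty, so there is no torsion. For (2) $\Rightarrow$ (3), I use $\Sq^1=\rho\circ\beta$. If $H^2(M;\Z)$ is torsion-free, then $\beta$, whose image is $2$-torsion, vanishes, and hence so does $\Sq^1$ on $H^1(M;\Z_2)$. This closes the cycle. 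The main point to double-check is the torsion-freeness input for (1) $\Rightarrow$ (2), where I cite the Davis--Januszkiewicz result rather than reprove it.
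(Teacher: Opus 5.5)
Your proof is correct, and for the substantive implication (3)$\Rightarrow$(1) it takes a genuinely different route from the paper.

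\textbf{The paper's route.} It first notes that $\Sq^1|_{H^1}=0$ forces $\Sq^1|_{H^2}=0$, by multiplicativity of $\Sq$ and since $H^*(M;\Z_2)$ is generated in degree one. It then applies Theorem~\ref{thm:Sq1_H2_characterizes_simplex_pullback} to get a pullback from the simplex. Finally it excludes the case ``simplex but not linear model'' via Lemma~\ref{lem:square_generator_RT}: there $x^2=\tau x$ on $H^1$ with $\tau\neq 0$, so $\Sq^1|_{H^1}=0$ would make $\tau$ annihilate $H^{>0}(M;\Z_2)$, contradicting Poincar\'e duality.

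\textbf{Your route.} You run the facet-exchange argument directly in degree one. With $s\in S_i^\sigma\setminus\{i\}$ and $R'=\prod_{r\neq i,s}v_{u_r}$, the $s$th row relation and the pseudomanifold property give $R'\,v_{u_s}^2=R\,v_{u_s}=R\,v_{p_i(\sigma)}\neq 0$. This is sound and more self-contained:
\begin{itemize}
\item it needs only $s$, not the auxiliary $t\notin S_i^\sigma$;
\item it exhibits an explicit generator with nonzero square;
\item it bypasses Theorem~\ref{thm:Sq1_H2_characterizes_simplex_pullback}, Lemma~\ref{lem:square_generator_RT} and the duality-pairing step;
\item it avoids a point the paper leaves implicit, namely that $\tau\neq 0$ once $c$ hits all $n+1$ vertices.
\end{itemize}
The paper's route, in exchange, presents the corollary as a consequence of the main theorem. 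It also makes visible that pullbacks from the linear model are exactly the pullbacks from the simplex with $\tau=0$. Your direct check of (1)$\Rightarrow$(3) via $v_j^2=0$ is fine but redundant given (1)$\Rightarrow$(2)$\Rightarrow$(3).

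\textbf{A correction to your aside.} Your ``alternative'' proof of (1)$\Rightarrow$(2) from Theorem~\ref{CaiChoi} does not work as written.
\begin{itemize}
\item The subcomplexes $K_\omega=K_{c^{-1}(A)}$, $A\subseteq[n]$, are generally not contractible. Taking $A=[n]$ gives $K$ itself, a sphere. Taking $A=\{a\}$ gives $|c^{-1}(a)|$ isolated points, already for the square with alternating $c$, where $M=T^2$.
\item What is true is that every facet meets $c^{-1}(A)$ in exactly $|A|$ vertices. So the critical cochain complex is concentrated in one degree and $\widetilde H^*(K_\omega;\Z)$ is free.
\item Even then, Theorem~\ref{CaiChoi} only rules out odd torsion and $\Z_{2^r}$-summands with $r\ge 2$. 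The $\Z_2$-summands need a separate argument. For instance, here $\Sq^1\equiv 0$ on all of $H^*(M;\Z_2)$, so $\rho$ is injective on $\im\beta$ as in Lemma~\ref{lem:betti_difference_vs_Sq1}. This forces $\beta=0$, hence $\ker(\times 2)=\im\beta=0$.
\end{itemize}
Your main line cites Davis--Januszkiewicz exactly as the paper does, so this does not affect the validity of your proof. The aside should still be dropped or repaired.
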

\begin{proof}
    The implication \eqref{eq:SS1}$\Rightarrow$\eqref{eq:SS2} is known by Davis--Januszkiewicz, and the implication \eqref{eq:SS2}$\Rightarrow$\eqref{eq:SS3} is obvious.
    
    Let us show the implication \eqref{eq:SS3}$\Rightarrow$\eqref{eq:SS1}.
    Since $x^2 =0$ for all $x \in H^1(M;\Z_2)$ and $H^\ast(M;\Z_2)$ is generated by degree-one elements, $\Sq^1$ vanishes also on $H^2(M;\Z_2)$.
    By Theorem~\ref{thm:Sq1_H2_characterizes_simplex_pullback}, $M$ is a pullback from the simplex.
    Suppose, toward a contradiction, that $M$ is not a pullback from the linear model.
    Then, there exists a nonzero $\tau \in H^1 (M;\Z_2)$ such that $\tau x = x^2 = 0$  for all $x \in H^1(M;\Z_2)$ by Lemma~\ref{lem:square_generator_RT}.
    It follows that $\tau z = 0$ for all $z \in H^{>0}(M;\Z_2)$.
    
    On the other hand, since $M$ is a closed $n$-manifold, Poincar\'e duality over $\Z_2$ implies that the pairing
    $$
        H^1(M;\Z_2)\times H^{n-1}(M;\Z_2)\to H^n(M;\Z_2)\cong \Z_2, \qquad (a,b)\mapsto a\smile b,
    $$
    is nondegenerate. 
    Hence, for every nonzero $\tau \in H^1(M;\Z_2)$, there exists $y\in H^{n-1}(M;\Z_2)$ such that $\tau y \neq 0$, a contradiction.
\end{proof}

Remark that Corollary~\ref{cor:pullback_linear_model} was originally proved 
in~\cite{Cai-Choi-Park2020}, and the proof above gives a new derivation.

\section{Torsion-freeness of odd-degree cohomology}\label{sec:torsionfree}

Let $X$ be a finite CW complex. For each $q\ge 0$, set
$$
    b^q(X):=\dim_{\Q} H^q(X;\Q), \quad \text{ and } \quad b^q_{\Z_2}(X):=\dim_{\Z_2} H^q(X;\Z_2).
$$
Let $T^q(X)$ denote the torsion subgroup of $H^q(X;\Z)$, so that
$$
    H^q(X;\Z)\cong \Z^{\,b^q(X)}\oplus T^q(X).
$$
We also denote by $\mu^q(X)$ the number of cyclic summands of even order in the torsion subgroup $T^q(X)$ of $H^q(X;\Z)$; equivalently,
$$
    \mu^q(X):=\dim_{\Z_2}\bigl(T^q(X)\otimes \Z_2\bigr).
$$
Note that $\mu^q(X)$ detects only torsion of even order; in particular, torsion of odd order does not contribute to $\mu^q(X)$.

The universal coefficient theorem for cohomology implies that
$$
H^q(X; \Z_2) \cong H^q(X; \Z) \otimes \Z_2 \oplus \Tor(H^{q+1}(X;\Z), \Z_2),
$$
although the decomposition is not canonical. Therefore,
\begin{align}
    b^q_{\Z_2}(X) &= \dim_{\Z_2}\bigl(H^q(X;\Z)\otimes \Z_2\bigr) + \dim_{\Z_2}\Tor(H^{q+1}(X;\Z),\Z_2) \notag\\
    &= b^q(X)+\mu^q(X)+\mu^{q+1}(X). \label{eq:bq}
\end{align}

\begin{proposition}\label{prop:betti_difference_and_odd_2_torsion}
    The following are equivalent:
    \begin{enumerate}
        \item For every $k\geq 1$,
        $$  
            b^{2k}(X)-b^{2k-1}(X) = b^{2k}_{\Z_2}(X)-b^{2k-1}_{\Z_2}(X), \text{ and }
        $$
        \item $\mu^{2k+1}(X)=0$ for every $k\ge 0$.
    \end{enumerate}
\end{proposition}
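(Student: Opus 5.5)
Using \eqref{eq:bq}, I will turn condition (1) into a statement about the numbers $\mu^q$ alone. For $k\ge 1$,
\[
b^{2k}_{\Z_2}(X)-b^{2k-1}_{\Z_2}(X) = b^{2k}(X)-b^{2k-1}(X) + \mu^{2k}(X)+\mu^{2k+1}(X)-\mu^{2k-1}(X)-\mu^{2k}(X),
\]
and the $\mu^{2k}$ terms cancel. So (1) is equivalent to
\[
\mu^{2k+1}(X)=\mu^{2k-1}(X)\quad\text{for every } k\ge 1.
\]
In other words, (1) holds exactly when the sequence $\mu^1(X),\mu^3(X),\mu^5(X),\dots$ is constant.

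Next I need the two ends of this sequence.
\begin{itemize}
\item Since $H^1(X;\Z)\cong\Hom(H_1(X),\Z)$ is torsion-free, $T^1(X)=0$, and so $\mu^1(X)=0$. This handles the case $k=0$ in (2).
\item Since $X$ is a finite CW complex, $H^q(X;\Z)=0$ for $q$ larger than $\dim X$, so $\mu^{2k+1}(X)=0$ for $k$ large.
\end{itemize}

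For (2)$\Rightarrow$(1), all the odd $\mu$'s vanish, so the telescoping condition $\mu^{2k+1}=\mu^{2k-1}$ holds trivially. For (1)$\Rightarrow$(2), the constant sequence starts at $\mu^1(X)=0$, so every term is zero. The only substantive input is the vanishing of torsion in $H^1(X;\Z)$; either the first entry or the eventual vanishing of the sequence is enough to pin the constant to zero. The rest is bookkeeping with \eqref{eq:bq}.
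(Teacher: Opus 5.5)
Your proposal is correct and matches the paper's proof: subtracting the identities \eqref{eq:bq} for $q=2k$ and $q=2k-1$ reduces (1) to $\mu^{2k+1}(X)=\mu^{2k-1}(X)$ for all $k\ge 1$, and $\mu^1(X)=0$ (from the torsion-freeness of $H^1(X;\Z)$) forces every odd $\mu$ to vanish. Your extra observation that $\mu^{q}(X)=0$ for $q>\dim X$ is valid, but it is not needed once $\mu^1(X)=0$ is available.
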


\begin{proof}
    Subtracting the identities for $q=2k$ and $q=2k-1$ in \eqref{eq:bq}, we obtain
    $$
        \bigl(b^{2k}_{\Z_2}(X)-b^{2k-1}_{\Z_2}(X)\bigr) - \bigl(b^{2k}(X)-b^{2k-1}(X)\bigr) = \mu^{2k+1}(X)-\mu^{2k-1}(X).
    $$
    Since $H^1(X;\Z)\cong \Hom(H_1(X;\Z),\Z)$ is torsion-free, we have $\mu^1(X)=0$. 
    Hence condition~\textup{(1)} is equivalent to
    $$
        \mu^{2k+1}(X)=\mu^{2k-1}(X) \qquad\text{for every }k\ge 1,
    $$
    which, together with $\mu^1(X)=0$, is equivalent to \textup{(2)}.
\end{proof}

\begin{lemma}\label{lem:betti_difference_vs_Sq1}
    Let $X$ be a finite CW complex such that $H^3(X;\Z)$ has no $\Z_{2^r}$-torsion for any $r>1$. 
    Then the following are equivalent:
    \begin{enumerate}
        \item\label{item:lemma1} $b^2(X)-b^1(X)=b^2_{\Z_2}(X)-b^1_{\Z_2}(X)$;
        \item\label{item:lemma2} The first Steenrod square vanishes on degree-two cohomology; that is,
        $$
            \Sq^1\big|_{H^{2}(X;\Z_2)}=0
        $$
    \end{enumerate}
\end{lemma}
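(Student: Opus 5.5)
By the computation in the proof of Proposition~\ref{prop:betti_difference_and_odd_2_torsion}, with $k=1$ and using $\mu^1(X)=0$, we get
$$
\bigl(b^{2}_{\Z_2}(X)-b^{1}_{\Z_2}(X)\bigr)-\bigl(b^{2}(X)-b^{1}(X)\bigr)=\mu^3(X).
$$
So condition~\eqref{item:lemma1} is equivalent to $\mu^3(X)=0$, which says that $H^3(X;\Z)$ has no torsion of even order. Under the hypothesis that there is no $\Z_{2^r}$-torsion with $r>1$, the $2$-primary part of $T^3(X)$ is an elementary abelian $2$-group. Hence $\mu^3(X)=0$ is equivalent to the vanishing of the $2$-torsion subgroup ${}_2H^3(X;\Z)$. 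The plan is therefore to show that $\Sq^1|_{H^2}=0$ holds if and only if $H^3(X;\Z)$ has no element of order $2$.

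Write $\Sq^1=\rho\circ\beta$. From the long exact sequence of~\eqref{eq:ses}, the image of the Bockstein $\beta\colon H^2(X;\Z_2)\to H^3(X;\Z)$ equals the kernel of multiplication by $2$ on $H^3(X;\Z)$, that is, ${}_2H^3(X;\Z)$. If $\mu^3(X)=0$, this image is zero, so $\Sq^1=\rho\beta=0$ on $H^2$.

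For the converse, suppose $\Sq^1|_{H^2}=0$. Then $\im\beta\subseteq\ker\rho=2H^3(X;\Z)$. Take any $y\in{}_2H^3(X;\Z)=\im\beta$. Then $y=2z$ for some $z\in H^3(X;\Z)$. Since $4z=2y=0$, the element $z$ is torsion of order dividing $4$. By hypothesis the $2$-primary torsion has exponent at most $2$, so $2z=0$, which gives $y=0$. Hence ${}_2H^3=0$, so $\mu^3=0$, and condition~\eqref{item:lemma1} follows.

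The main point requiring care is exactly this last step. It is where the exponent hypothesis is used: a $\Z_4$ summand would give a nonzero $\beta$-image lying in $2H^3$ and so invisible to $\rho$. One also has to check that odd-order torsion plays no role. This holds because $z$ has order dividing $4$, so it lies in the $2$-primary part.
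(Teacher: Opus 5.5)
Your proposal is correct and follows essentially the same route as the paper: reduce condition (1) to $\mu^3(X)=0$ via the Betti-number identity, identify $\im\beta$ with the $2$-torsion subgroup of $H^3(X;\Z)$, and use the exponent hypothesis (through $y=2z$, $4z=0$) to show that $\rho$ kills no nonzero element of $\im\beta$. The only difference is presentation: the paper states this as injectivity of $\rho$ on $\im\beta$, while you prove the two implications separately.
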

\begin{proof}
    By Proposition~\ref{prop:betti_difference_and_odd_2_torsion}, condition~\eqref{item:lemma1} is equivalent to $\mu^{3}(X)=0$.
    Under the present assumption, this is equivalent to saying that $H^{3}(X;\Z)$ has no element of order $2$.
    We recall $Sq^1=\rho\circ\beta$.
    We claim that $\rho$ is injective on $\im(\beta)=\ker(\times 2)$, where $\times 2\colon H^{3}(X;\Z)\to H^{3}(X;\Z)$ is the map induced from the short exact sequence~\eqref{eq:ses}.     
    Indeed, let $y\in \ker(\times 2)$ and assume that $\rho(y)=0$. 
    By exactness, $y=2z$ for some $z\in H^{3}(X;\Z)$. 
    Since $2y=0$, we have $4z=0$. 
    If $y\neq 0$, then $2z\neq 0$, so $z$ has order $4$, contradicting the assumption that $H^{3}(X;\Z)$ has no element of order $2^r$ for any $r>1$. 
    Therefore $y=0$, proving the claim.
    It follows that condition~\eqref{item:lemma2} is equivalent to the vanishing of $\beta$, and hence to condition~\eqref{item:lemma1}.
\end{proof}

Let $K$ be the boundary complex of an $n$-dimensional simplicial polytope and let $\lambda \colon V(K) \to \Z_2^n$ be a characteristic map such that $\lambda = \lambda_{\partial \Delta^n} \circ c$ for a nondegenerate simplicial map $c \colon K\longrightarrow \partial\Delta^n$.
After relabeling the vertices if necessary, we may assume that $c(i)=i$ for $i=1,\dots,n$, and we regard $\lambda$ as an $n\times m$ matrix over~$\Z_2$, where $m=|V(K)|$.
For $\omega=(w_1,\dots,w_m)\in \row(\lambda)$, define
$$
    \supp(\omega):=\{\,j\in [m]\mid w_j\neq 0\,\}.
$$

Let $\rho_1,\dots,\rho_n\in \Z_2^m$ denote the rows of $\lambda$.
Then every element $\omega \in \row(\lambda)$ can be written uniquely in the form
$$
    \omega=a_1\rho_1+\cdots+a_n\rho_n
$$
for some $(a_1,\dots,a_n)\in \Z_2^n$.
Set
$$
    S_\omega:=\{ i\in [n] \mid a_i\neq 0 \},
$$
and define an even subset $\chi_\omega\subset [n+1]$ by
$$
    \chi_\omega=
        \begin{cases}
            S_\omega, & \text{if } |S_\omega| \text{ is even},\\
            S_\omega\cup\{n+1\}, & \text{if } |S_\omega| \text{ is odd}.
        \end{cases}
$$
Then $\omega\mapsto \chi_\omega$ gives a bijection between $\row(\lambda)$ and the set of all even subsets of~$[n+1]$.
Furthermore, we have
$$
    \supp(\omega)= c^{-1}(\chi_\omega) = \{j\in [m]\mid c(j)\in \chi_\omega\}.
$$
Indeed, if $c(j)\in [n]$, then $w_j=a_{c(j)}$, while if $c(j)=n+1$, then
$$
    w_j=a_1+\cdots+a_n,
$$
which is determined by the parity of $|S_\omega|$. Hence $w_j\neq 0$ if and only if $c(j)\in \chi_\omega$.

Consequently, the full subcomplex, denoted by~$K_\omega$ or~$K_{\chi_\omega}$, of~$K$ determined by~$\omega$ coincides with the full subcomplex on the vertices whose images under~$c$ lie in~$\chi_\omega$.

\begin{lemma}\label{lemma:odd-cohomology-torsion-free-simplex-pullback}
    Let $K$ be a shellable simplicial sphere, and let $\lambda$ be a characteristic map over $K$.
    Assume that $M=M(K,\lambda)$ is a pullback from the simplex.
    Then
    $$
        H^{2k+1}(M;\Z)
    $$
    is torsion-free for every $k\ge 0$.
\end{lemma}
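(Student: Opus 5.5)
The plan is to combine Theorem~\ref{thm:Sq1_H2_characterizes_simplex_pullback} with Theorem~\ref{CaiChoi}: the first controls torsion of order exactly $2$, the second transfers everything else to the full subcomplexes $K_\omega$. Fix $k\ge 0$.

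\emph{Step 1: no summands $\Z_2$.} Since $M$ is a pullback from the simplex, Theorem~\ref{thm:Sq1_H2_characterizes_simplex_pullback} gives $\Sq^1=\rho\circ\beta=0$ on $H^{2k}(M;\Z_2)$. Exactness of the Bockstein sequence for \eqref{eq:ses} gives $\im\beta=\ker(\times 2)$ in $H^{2k+1}(M;\Z)$, and $\ker\rho=2H^{2k+1}(M;\Z)$. Hence every element of order $2$ in $H^{2k+1}(M;\Z)$ is divisible by $2$. A cyclic summand $\Z_2$ would contain an element of order $2$ that is not divisible by $2$, so no such summand occurs. It remains to exclude odd torsion and summands $\Z_{2^{r+1}}$ with $r\ge 1$.

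\emph{Step 2: reduction to the $K_\omega$.} By Theorem~\ref{CaiChoi}, odd $p$-primary torsion of $H^{2k+1}(M;\Z)$ coincides with that of $\bigoplus_{\omega}\widetilde H^{2k}(K_\omega;\Z)$. Likewise, summands $\Z_{2^{r+1}}$ with $r\ge1$ correspond to summands $\Z_{2^r}$ there. So the lemma reduces to the following claim.

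\textbf{Claim.} For every even subset $\chi\subset[n+1]$, the group $\widetilde H^{2k}(K_\chi;\Z)$ is torsion-free, where $K_\chi$ is the full subcomplex of $K$ on $c^{-1}(\chi)$.

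By the universal coefficient theorem, the claim is equivalent to $H_{2k-1}(K_\chi;\Z)$ being torsion-free.

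\emph{Step 3: the structure of $K_\chi$ (main obstacle).} The nondegenerate map $c$ restricts to a nondegenerate simplicial map $K_\chi\to\partial\Delta^{\chi}$, the boundary of the simplex on $\chi$. This gives a ``colouring'' of $K_\chi$ by $|\chi|$ colours in which no face uses all colours. I would first try to exploit the sphere structure of $K$ through Alexander duality. The complement of $c^{-1}(\chi)$ is $c^{-1}([n+1]\setminus\chi)$, so $\widetilde H_{2k-1}(K_\chi)\cong \widetilde H^{\,n-2k-1}(K_{[n+1]\setminus\chi})$. Both sides are full subcomplexes coloured by subsets of $[n+1]$. I would then argue by induction on $|\chi|$, removing one colour $j\in\chi$ at a time. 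The Mayer--Vietoris sequence for $K_\chi = K_{\chi\setminus j}\cup \bigcup_{c(v)=j}\operatorname{st}(v)$ should express $K_\chi$, up to homotopy, in terms of links of the vertices of colour $j$. These links are again spheres with pullback-from-simplex colourings (of one dimension less), and their coloured full subcomplexes would be covered by induction.

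I expect the main difficulty to be controlling the connecting homomorphisms in this Mayer--Vietoris sequence, so that no torsion appears in the relevant odd homological degree $2k-1$. If the direct homological induction stalls, I would try a fallback. One would use Step~1 together with the converse direction of Theorem~\ref{CaiChoi}: any $\Z_{2^r}$ summand in some $\widetilde H^{2k}(K_\chi)$ would produce a $\Z_{2^{r+1}}$ summand in $H^{2k+1}(M)$. Then one would look for a contradiction with the multiplicative structure $\Sq=(1+\tau)^q$ from Lemma~\ref{lem:square_generator_RT}, for instance via the higher Bocksteins detected through the relation $v_j^2=\tau v_j$.
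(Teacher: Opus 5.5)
Your Steps~1 and~2 are correct and match the skeleton of the paper's proof. Vanishing of $\Sq^1=\rho\circ\beta$ on $H^{2k}(M;\Z_2)$ excludes $\Z_2$-summands of $H^{2k+1}(M;\Z)$. Theorem~\ref{CaiChoi} reduces odd torsion and $\Z_{2^{r+1}}$-summands ($r\ge 1$) to torsion in $\widetilde H^{2k}(K_\chi;\Z)$ for even $\chi$.

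The genuine gap is that your Claim, which is the real content of the lemma, is never proved. Step~3 is only a plan, and it has three problems:
\begin{itemize}
\item You leave the connecting homomorphisms of the Mayer--Vietoris induction uncontrolled. That induction would also have to pass through the odd colour sets $\chi\setminus\{j\}$, since links of colour-$j$ vertices meet $K_\chi$ in full subcomplexes on $\chi\setminus\{j\}$, and you formulate no inductive hypothesis for those.
\item The fallback via higher Bocksteins is purely $2$-primary. It cannot rule out odd torsion in $\widetilde H^{2k}(K_\chi;\Z)$, which Theorem~\ref{CaiChoi} transfers verbatim to $H^{2k+1}(M;\Z)$.
\item Your starting structural claim is false: $c$ does not map $K_\chi$ into $\partial\Delta^{\chi}$. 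If a facet $\sigma$ misses a colour outside $\chi$, then $\sigma\cap c^{-1}(\chi)$ uses all $|\chi|$ colours. For example, the $4$-cycle coloured $1,2,1,2$ with $\chi=\{1,2\}$ has $K_\chi=K$. The correct bound is $\dim K_\chi\le |\chi|-1$.
\end{itemize}

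The paper supplies the missing step with the shelling. Each $c(\sigma_i)$ omits exactly one colour, so $\eta_i=\sigma_i\cap c^{-1}(\chi)$ has $|\chi|-1$ or $|\chi|$ vertices. The Cai--Choi critical cochain complex computing $\widetilde H^\ast(K_\chi;\Z)$ is generated by the $\eta_i$ with $\eta_i=r(\sigma_i)$, so it is a two-term complex of free groups in degrees $|\chi|-2$ and $|\chi|-1$. Hence $\widetilde H^{|\chi|-2}=\ker\delta$ is free, and torsion can only occur in the odd degree $|\chi|-1$.

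Your own Alexander duality isomorphism would in fact have closed the gap without any induction, once the dimension bound is corrected. Put $\bar\chi=[n+1]\setminus\chi$, so $\dim K_{\bar\chi}\le n-|\chi|$. Since $2k-1$ is odd and $|\chi|$ is even, one of two cases holds:
\begin{itemize}
\item $2k-1\ge |\chi|-1\ge \dim K_\chi$. Then $\widetilde H_{2k-1}(K_\chi;\Z)$ is zero or a group of top-dimensional cycles, hence free.
\item $2k-1\le |\chi|-3$. Then $n-2k-1\ge n-|\chi|+1>\dim K_{\bar\chi}$, so the dual group $\widetilde H^{n-2k-1}(K_{\bar\chi};\Z)$ vanishes.
\end{itemize}
The cases $c^{-1}(\chi)=\emptyset$ and $c^{-1}(\chi)=V(K)$ are trivial. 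This variant uses only that $K$ is a sphere for the Claim; shellability is still needed for Theorem~\ref{CaiChoi}. As written, however, your proposal does not carry it out.
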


\begin{proof}
    Let $c\colon V(K)\to [n+1]$ be the nondegenerate simplicial map corresponding to the pullback structure.
    Fix a shelling $\sigma_1,\dots,\sigma_N$ of $K$.
    For each $i$, let $r(\sigma_i)$ denote the \emph{restriction face} of $\sigma_i$, namely, the unique minimal face of $\sigma_i$ not contained in the previous term of the filtration induced by the shelling.
    Now fix an even subset $\chi\subset [n+1]$.
    For each $i$, put
    $$
        \eta_i:=\sigma_i\cap c^{-1}(\chi).
    $$
    Since $K_\chi$ is the induced subcomplex on the vertex set $c^{-1}(\chi)$, each $\eta_i$ is a simplex in $K_\chi$.
    
    Because $M(K,\lambda)$ is a pullback from the simplex, $c(\sigma_i)$ is a facet of $\partial \Delta^n$, and so it misses exactly one element for each $i$; we denote this element by $p_i$.
    Then
    $$
        |\eta_i|=
            \begin{cases}
                |\chi|-1, & \text{if } p_i\in \chi,\\[4pt]
                |\chi|, & \text{if } p_i\notin \chi.
            \end{cases}
    $$
    
    By \cite[Lemma~4.4 and Proposition~4.5]{Cai-Choi2021}, for a pure simplicial complex~$K$ with a shelling $\sigma_1, \dots, \sigma_N$ and a full subcomplex~$L$ with its vertex set $V(L) \subset V(K)$, one obtains a chain complex generated by \emph{critical simplices} associated with $L$.
    We denote its dual cochain complex by $C^\ast_{\crit}(L)$, and then its cohomology is isomorphic to $\widetilde{H}^\ast(L ; \Z)$ as graded abelian groups. 
    Moreover, by \cite[Lemma~4.6]{Cai-Choi2021}, each cochain group $C^q_{\crit}(L)$ is a free abelian group generated by (the duals of) the simplices $r(\sigma_j)$ such that $\dim(r(\sigma_j)) = q$ and $r(\sigma_j) = \sigma_j \cap V(L)$.
    
    In the case of $L=K_\chi$, this complex is freely generated by those simplices $\eta_i$ satisfying
    $$
        \eta_i=r(\sigma_i).
    $$
    
    Since every such generator is one of the simplices $\eta_i$, it follows that the cochain complex $C_{\crit}^\ast(K_\chi)$ is concentrated in the two degrees $|\chi|-2$ and $|\chi|-1$.
    Therefore we have a two-term complex of free abelian groups
    $$
        0\longrightarrow C^{|\chi|-2}_{\crit}(K_\chi) \xrightarrow{\ \delta\ } C^{|\chi|-1}_{\crit}(K_\chi) \longrightarrow 0,
    $$
    where $\delta$ is the differential of the cochain complex.
    Consequently,
    $$
        \widetilde H^{|\chi|-2}(K_\chi;\Z)=\ker(\delta)
    $$
    is torsion-free, because a subgroup of a free abelian group is free abelian.
    
    By Theorem~\ref{CaiChoi}, the group $H^{q+1}(M;\Z)$ is controlled by the groups $\widetilde H^q(K_\omega;\Z)$ for $\omega\in\row(\lambda)$.
    More precisely, odd-primary torsion is preserved, while each~$\Z_{2^r}$-summand $(r\ge 2)$ of $H^{q+1}(M;\Z)$ corresponds to a $\Z_{2^{r-1}}$-summand of some $\widetilde H^q(K_\omega;\Z)$.
    Since $K_\omega=K_{\chi_\omega}$ for every $\omega\in\row(\lambda)$, and since $\widetilde H^*(K_{\chi_\omega};\Z)$ may have torsion only in degree $|\chi_\omega|-1$, which is odd, it follows that for every odd integer $q$, the torsion subgroup of $H^q(M;\Z)$ can only be a direct sum of copies of~$\Z_2$.
    
    However, by Theorem~\ref{thm:Sq1_H2_characterizes_simplex_pullback} and a similar argument as in the proof of Lemma~\ref{lem:betti_difference_vs_Sq1}, the odd-degree cohomology of $M$ has no element of order $2$.
    Therefore $H^q(M;\Z)$ is torsion-free for every odd integer $q$, as desired.
\end{proof}

The preceding results yield the following strong characterization of pullbacks from the simplex.
\begin{theorem}\label{thm:odd-torsionfree-betti-characterization}
    Let $M$ be a small cover.
    Then the following are equivalent:
    \begin{enumerate}
        \item\label{torsion1} $M$ is a pullback from the simplex;
        \item\label{torsion2} $H^{\odd}(M;\Z)$ is torsion-free;
        \item\label{torsion3} $H^3(M;\Z)$ is torsion-free;
        \item\label{torsion4} $b^{2k}(M)-b^{2k-1}(M) = b^{2k}_{\Z_2}(M)-b^{2k-1}_{\Z_2}(M)$ for every $k\ge 1$; and
        \item\label{torsion5} $b^2(M)-b^1(M) =b^2_{\Z_2}(M)-b^1_{\Z_2}(M)$.
    \end{enumerate}
\end{theorem}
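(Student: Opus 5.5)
We will establish the cycle of implications \eqref{torsion1}$\Rightarrow$\eqref{torsion2}$\Rightarrow$\eqref{torsion3}$\Rightarrow$\eqref{torsion5}$\Rightarrow$\eqref{torsion1}, and separately \eqref{torsion2}$\Leftrightarrow$\eqref{torsion4}.

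The implication \eqref{torsion1}$\Rightarrow$\eqref{torsion2} is exactly Lemma~\ref{lemma:odd-cohomology-torsion-free-simplex-pullback}. The boundary complex of a simplicial polytope is shellable, so the lemma applies to small covers. The implication \eqref{torsion2}$\Rightarrow$\eqref{torsion3} is trivial.

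For \eqref{torsion2}$\Leftrightarrow$\eqref{torsion4}, we apply Proposition~\ref{prop:betti_difference_and_odd_2_torsion} with $X=M$. Condition \eqref{torsion4} is equivalent to $\mu^{2k+1}(M)=0$ for all $k$. Torsion-freeness of $H^{\odd}(M;\Z)$ clearly gives this. Conversely, suppose $\mu^{\odd}(M)=0$, so no odd-degree group has even-order torsion. We then need to exclude odd-primary torsion in odd degrees. Here Theorem~\ref{CaiChoi} enters, and it forces $\mu^{\odd}=0$ to be routed through \eqref{torsion1} rather than proved directly. So we argue as follows: from $\mu^3=0$ we get \eqref{torsion5}, hence \eqref{torsion1}, hence \eqref{torsion2}. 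Concretely, we prove \eqref{torsion4}$\Rightarrow$\eqref{torsion5} by specializing to $k=1$, and obtain \eqref{torsion2}$\Rightarrow$\eqref{torsion4} from the proposition. Thus it suffices to prove \eqref{torsion5}$\Rightarrow$\eqref{torsion1} and \eqref{torsion3}$\Rightarrow$\eqref{torsion5}.

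For \eqref{torsion3}$\Rightarrow$\eqref{torsion5}: if $H^3(M;\Z)$ is torsion-free then $\mu^3(M)=0$. By the computation in the proof of Proposition~\ref{prop:betti_difference_and_odd_2_torsion} for $k=1$, together with $\mu^1=0$, the two Betti differences then agree.

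The main step is \eqref{torsion5}$\Rightarrow$\eqref{torsion1}. By the proof of Proposition~\ref{prop:betti_difference_and_odd_2_torsion}, \eqref{torsion5} gives $\mu^3(M)=0$, i.e.\ $H^3(M;\Z)$ has no element of order $2$. In particular it has no $\Z_{2^r}$-summands at all, so the hypothesis of Lemma~\ref{lem:betti_difference_vs_Sq1} holds (no $\Z_{2^r}$-torsion with $r>1$). That lemma then converts \eqref{torsion5} into the vanishing of $\Sq^1$ on $H^2(M;\Z_2)$. Even more directly: since $\Sq^1=\rho\circ\beta$ and $\beta$ lands in the $2$-torsion $\ker(\times 2)\subset H^3(M;\Z)$, which is zero, we get $\Sq^1|_{H^2}=0$. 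Theorem~\ref{thm:Sq1_H2_characterizes_simplex_pullback} then gives that $M$ is a pullback from the simplex.

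The only delicate point is this last passage from a Betti-number identity to vanishing of all of $\Sq^1$ on $H^2$. It rests on the observation that $\mu^3=0$ kills the entire $2$-primary torsion of $H^3(M;\Z)$, since any nonzero $2$-primary torsion contributes a $\Z_{2^r}$ summand and hence to $\mu^3$. With that in place, every implication is an assembly of the earlier results.
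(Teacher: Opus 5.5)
Your proposal is correct and follows essentially the same route as the paper: Lemma~\ref{lemma:odd-cohomology-torsion-free-simplex-pullback} (via shellability of polytopal spheres) gives (1)$\Rightarrow$(2); Proposition~\ref{prop:betti_difference_and_odd_2_torsion} gives (2)$\Rightarrow$(4)$\Rightarrow$(5) and (3)$\Rightarrow$(5); and $\mu^3(M)=0\Rightarrow \Sq^1|_{H^2(M;\Z_2)}=0\Rightarrow$(1) via Theorem~\ref{thm:Sq1_H2_characterizes_simplex_pullback}. Your direct observation that $\beta$ lands in the $2$-torsion of $H^3(M;\Z)$, which is zero once $\mu^3(M)=0$, mildly streamlines the paper's appeal to Lemma~\ref{lem:betti_difference_vs_Sq1}, since only the easy direction of that lemma is needed; your paragraph on (2)$\Leftrightarrow$(4) is worded confusingly, but the resulting cycle of implications is sound.
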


\begin{proof}
Recall that every polytopal simplicial sphere is shellable \cite{Bruggesser-Mani1971}.
Therefore, the implication \eqref{torsion1} $\Rightarrow$ \eqref{torsion2} follows from Lemma~\ref{lemma:odd-cohomology-torsion-free-simplex-pullback}, and \eqref{torsion2} $\Rightarrow$ \eqref{torsion3} is immediate.
By Proposition~\ref{prop:betti_difference_and_odd_2_torsion}, condition~\eqref{torsion4} is equivalent to the vanishing of $\mu^{2k+1}(M)$ for every $k\ge 0$.
Hence \eqref{torsion2} $\Rightarrow$ \eqref{torsion4}, and clearly \eqref{torsion4} $\Rightarrow$ \eqref{torsion5}.

Assume~\eqref{torsion3}.
Since $H^1(M;\Z)$ is torsion-free and $H^3(M;\Z)$ is torsion-free by assumption, we have $\mu^1(M)=\mu^3(M)=0$.
Hence condition~\eqref{torsion5} follows from the universal coefficient theorem.
Thus \eqref{torsion3} $\Rightarrow$ \eqref{torsion5}.

Finally, assume~\eqref{torsion5}.
Since $H^1(M;\Z)$ is torsion-free, condition~\eqref{torsion5} implies $\mu^3(M)=0$.
Therefore Lemma~\ref{lem:betti_difference_vs_Sq1} shows that
$$
    \Sq^1\colon H^2(M;\Z_2)\to H^3(M;\Z_2)
$$
vanishes, and Theorem~\ref{thm:Sq1_H2_characterizes_simplex_pullback} yields~\eqref{torsion1}.

Therefore all five conditions are equivalent.
\end{proof}

\begin{example}
    Continuing Example~\ref{ex:Subsec3dim}, let $M$ be any orientable $3$-dimensional small cover. 
    Then $b^4(M)=b^4_{\Z_2}(M)=0$ and $b^3(M)=b^3_{\Z_2}(M)=1$.
    Moreover, by Poincar\'e duality,
    $$
        b^2(M)-b^1(M)=b^2_{\Z_2}(M)-b^1_{\Z_2}(M)=0.
    $$
    Hence condition~\eqref{torsion4} in Theorem~\ref{thm:odd-torsionfree-betti-characterization} holds for~$M$.
\end{example}

\begin{example}\label{ex:Bier}
    We continue with the class of examples introduced in Example~\ref{ex:SubsecBier}, where Bier spheres give rise to real toric manifolds that are pullbacks from the simplex.

    For instance, let~$K$ be a simplicial complex on~$\{1,\ldots,9\}$ with facets
    $$
        \{1,3,8\},\quad \{1,6,7,8,9\},\quad \{2,4,5,6,8\},\quad \{2,7\},\quad \text{ and } \quad \{3,4,5,6,7,8,9\},
    $$
    and let $M=M\bigl(\Bier(K),\lambda_{\Bier(K)}\bigr)$ be the associated real toric manifold.
    For this particular~$K$, the integral and mod~$2$ Betti numbers of~$M$ are listed in Table~\ref{table:Bier}, which confirms condition~\eqref{torsion4} in Theorem~\ref{thm:odd-torsionfree-betti-characterization}.
    \begin{table}
        \centering
        \renewcommand{\arraystretch}{1.2}
        \setlength{\tabcolsep}{6pt}
        \begin{tabular}{l!{\vrule width 1pt}c!{\vrule width 0.4pt}cc!{\vrule width 0.4pt}cc!{\vrule width 0.4pt}cc!{\vrule width 0.4pt}cc!{\vrule width 0.4pt}c}
        \toprule
        $k$ & $0$ & $1$ & $2$ & $3$ & $4$ & $5$ & $6$ & $7$ & $8$ & $\ge 9$ \\
        \midrule
        $b^k(M)$
        & $1$ & $1$ & $31$ & $23$ & $43$ & $48$ & $7$ & $9$ & $0$ & $0$ \\
        $b^k_{\Z_2}(M)$
        & $1$ & $10$ & $40$ & $81$ & $101$ & $81$ & $40$ & $10$ & $1$ & $0$ \\
        \bottomrule
        \end{tabular}
        \caption{Betti numbers of a pullback from the simplex}
        \label{table:Bier}
    \end{table}

    In~\cite{Choi-Yoon-Yu2026}, the analogue of this property was established for arbitrary~$K$ by purely combinatorial methods.
    In the present framework, however, this fact acquires a topological meaning, because~$M$ is a pullback from the simplex.
    Thus, the validity of condition~\eqref{torsion4} is explained by the general topological results of this paper.
\end{example}


\end{document}